\theoremstyle{plain}
 \newtheorem{thm}{Theorem}[section]
 \newtheorem{prop}{Proposition}[section]
 \newtheorem{lem}{Lemma}[section]
 \newtheorem{cor}{Corollary}[section]
\theoremstyle{definition}
 \newtheorem{dfn}{Definition}[section]
\theoremstyle{remark}
 \newtheorem{rem}{Remark}[section]
 \numberwithin{equation}{subsection}
\renewcommand{\leq}{\leqslant}
\renewcommand{\geq}{\geqslant}
\renewcommand{\setminus}{\smallsetminus}
\title[Description of the set of strictly regular QBO and examples]{Description of the set of strictly regular quadratic bistochastic operators and examples}
\subjclass[2010]{}
\keywords{Affine hull, convex hull, simplex, extreme point, relative interior of a convex set, fixed point, periodic point, stochastic
operator, bistochastic operator, strictly regular stochastic operator}
\author[M.Makhmudov]{Mirmukhsin Makhmudov} 
\address{M.Makhmudov, Faculty of Mathematics,National University of Uzbekistan, Tashkent, Uzbekistan}
\email{mirmukhsinmath@gmail.com}
\begin{document}

{\begin{flushleft}\baselineskip9pt\scriptsize
\end{flushleft}}
\vspace{18mm} \setcounter{page}{1} \thispagestyle{empty}

\begin{abstract}
 The present paper focuses on the dynamical systems
of the quadratic bistochastic operators (QBO) on the standard simplex. In the paper,
we show the character of connection of the dynamical systems of a bistochastic operator with
 the dynamical systems of the extreme bistochastic operators. In addition,
 we prove that almost all quadratic bistochastic operators are strictly regular and give a description of the
strictly regular quadratic bistochastic operators in the convex polytope QBO. Furthermore, the
density of the set of strictly regular QBO in the set of QBO is proved
and nontrivial examples of strictly regular bistochastic operators are given.
\end{abstract}

\maketitle

\section{Introduction}  

Many processes in population biology can be associated with some nonlinear dynamical systems. In particular, dynamical systems generated by quadratic stochastic operators (QSOs) formulate many problems of population genetics. In general, dynamical systems of quadratic stochastic operators are very complex and difficult. Therefore, dynamical systems of a certain type of QSO are investigated. Quadratic bistochastic operators (QBO) are one type of QSO. An interesting property of dynamical systems of QBO is that the trajectory of any initial point converges to a periodic orbit. In other words, the  $\omega-$limit set of any starting point is always finite.

This article appeared at the intersection of several branches of mathematics, such as the theory of convex polytopes, the theory of majorization, and the theory of QSO. In the article, we give an algebraic expression of the relative interior points of convex polytope and prove the theorem on periodic points of bistochastic operators. In addition, we prove the strict regularity of all operators in the relative interior of the polytope QBO and give non-trivial examples of strictly regular quadratic bistochastic operators.

\section{Preliminaries}  

In this section, we give some important definitions in the theory of convex polytope, the theory of majorization, and the theory of QSO. Therefore, we recall some concepts in affine geometry and the theory of dynamical systems. First, we define an affine structure on $R^d$. Throughout the article, we do not distinguish the concept of a point from a vector, and this does not lead to confusion.

The affine (convex) combination of $a_1, a_2,...,a_s\in R^d$ is, by definition, $\lambda_1 a_1+\lambda_2 a_2+...+\lambda_s a_s$ where $\sum_{j=1}^s \lambda_j=1$ and $\lambda_j\in R$ ($\lambda_j\in R_+$) for $j=\overline{1,s}$. A nonempty subset, $L\subset R^d$, is called an affine subspace of $R^d$ if it is closed with respect to affine combinations of its elements. It is clear that a nonempty intersection of affine spaces is also an affine space, hence the affine hull, $Aff(M)$, of a set, $M$, in $R^d$ is defined by the intersection of all affine subspaces including  $M$. One can easily prove that
\begin{equation}\label{Eq0}
Aff(M)=\{\sum_{i=1}^t\mu_i z_i:\mu_i\in R, t\in N, \sum_{i=1}^t \mu_i=1, z_i\in M, i=\overline{1,t}\}.
\end{equation}
Points $a_1, a_2,...,a_r\in R^d$ are called affine-dependent if one of them lies in the affine hull of the others. Otherwise, $a_1, a_2,...,a_r\in R^d$ is called affine-independent. The maximal affine-independent system of elements of the affine space, $L$, is called the affine basis of $L$. Obviously; the number of elements in the basis does not depend on the choice of basis. One previous small cardinal number from the cardinality of an affine basis is called the affine dimension of  $L$, and is denoted by $dim(L)$. A subset, $Q$, is called convex if it is closed with respect to the convex combinations of its elements, and the empty set is considered as a convex set by definition. The intersection of convex sets is also convex; therefore, the convex hull,  $conv(M)$, of a given nonempty subset, $M\subset R^d$, is defined by the intersection of all convex sets into which it enters. Obviously, $conv(M)=\{\sum_{i=1}^t\mu_i z_i:\mu_i\in R_+, t\in N, \sum_{i=1}^t \mu_i=1, z_i\in M, i=\overline{1,t}\}$. A point of $Q$ which cannot be expressed as a convex combination of other points in $Q$, is called an extreme point of $Q$. The set of all extreme points of $Q$ is denoted by $Extr(Q)$. The convex hull of a finite set is called a polytope. A simplex is defined as the convex hull of affine independent vectors. The following set is called the standard $(d-1)$- dimensional simplex; $$S^{d-1}=\{x =(x_1, x_2,..., x_d)\in R^d:\sum_{i=1}^{d} x_i=1, x_i\geq0, i=\overline{1,d}\}.$$

Throughout the paper we consider the $l_1$ norm on $R^d$  , namely $||x||=|x_1|+|x_2|...+|x_d|$ for $x=(x_1,x_2,...,x_d)\in R^{d}$. Then we can induce a metrics on $Aff(M)$ from this norm, where $M$  is a nonempty set. The interior of $M$  with respect to this induced metric is called the relative interior of $M$  and it is denoted by $ri(M)$. We emphasize that the relative interior of the set does not depend on the choice of norm on $R^d$, since all the norms on $R^d$  are mutually equivalent; therefore, they generate the same topology.

For any $x=(x_1,x_2,...,x_m)\in S^{m-1}$, due to \cite{Mar_Ol}, we define $x_{\downarrow}=(x_{[1]},x_{[2]},...,x_{[m]})$, where $(x_{[1]},x_{[2]},...,x_{[m]})$   is a non-increasing rearrangement of $x$  coordinates i.e. $x_{[1]}\geq x_{[2]}\geq...\geq x_{[m]}$. The point $x_{\downarrow}$  is called a permutation of $x$ in non-increasing. For two elements $x,y$ taken from the simplex, we say that the element $x$ majorizes on $y$ ($y$ majorizes $x$), and write  $x\prec y$ (or $y\succ x$) if the following holds:
$$\sum_{i=1}^{k} x_{[i]}\leq \sum_{i=1}^{k} y_{[i]}$$ for $k=\overline{1,m-1}$.

A geometric illustration can be given in majorization as follows: a vector will be called a permutation vector of $y$  if it is obtained from a permutation of coordinate places of $y$ . Let  $\Pi_{y}$ be the convex hull of all permutation vectors of $y$ . Then according to \cite{Mar_Ol} the following holds:
\begin{prop}\cite{Mar_Ol}
 $y$ majorizes $x$ if and only if $x\in \Pi_{y}$ . In addition, each permutation vector of $y$  is a extreme point of $\Pi_{y}$.
\end{prop}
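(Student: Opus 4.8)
The plan is to establish the two directions of the equivalence separately and then handle the extremality statement. For the easy direction, suppose $x\in\Pi_y$, so $x=\sum_{j}\lambda_j p_j$ is a convex combination of permutation vectors $p_1,\dots,p_N$ of $y$ with $\lambda_j\ge 0$ and $\sum_j\lambda_j=1$. First I would note that every permutation vector $p$ of $y$ satisfies $p\prec y$ trivially, since $p$ and $y$ have identical non-increasing rearrangements and hence identical partial sums $\sum_{i=1}^{k}p_{[i]}=\sum_{i=1}^{k}y_{[i]}$. Next I would use that for each fixed $k$ the function $S_k(z)=\sum_{i=1}^{k}z_{[i]}$, the sum of the $k$ largest coordinates of $z$, is convex on $R^m$, because $S_k(z)=\max_{|I|=k}\sum_{i\in I}z_i$ is a maximum of finitely many linear functionals. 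Applying this convexity to the convex combination gives $S_k(x)\le\sum_j\lambda_j S_k(p_j)=S_k(y)$ for $k=\overline{1,m-1}$; since every permutation vector shares the coordinate sum of $y$, the total sums also agree, and therefore $x\prec y$.

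For the converse direction, $x\prec y\Rightarrow x\in\Pi_y$, which I expect to be the main obstacle, I would use a $T$-transform (Robin-Hood) argument. Assuming without loss of generality that $x$ and $y$ are both sorted non-increasingly, if $x=y$ we are done; otherwise there exist indices $j<k$ with $y_j>x_j$ and $y_k<x_k$. I would then apply the $T$-transform $T=\lambda I+(1-\lambda)\Theta$, where $\Theta$ is the matrix transposing coordinates $j$ and $k$, choosing $\lambda\in[\tfrac12,1]$ so that the resulting vector $y'=Ty$ moves toward $x$ and agrees with $x$ in at least one additional coordinate while still satisfying $x\prec y'\prec y$. Since a $T$-transform is itself a convex combination of permutation matrices (the identity and a transposition), applying it to a point of $\Pi_y$ keeps the result in $\Pi_y$. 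Inducting on the number of coordinates in which the sorted vectors disagree, after finitely many such transforms I reach $x$, so $x$ is obtained from $y$ by a composition of doubly stochastic $T$-transforms, whence $x\in\Pi_y$. Equivalently, one may invoke the Hardy--Littlewood--P\'olya theorem to write $x=Dy$ for a doubly stochastic $D$ and then Birkhoff's theorem to decompose $D$ into permutation matrices.

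For the final assertion that each permutation vector is an extreme point of $\Pi_y$, I would argue by strict convexity of the Euclidean ball. All permutation vectors of $y$ share the same Euclidean norm $r=\|y\|_2$, so they lie on the sphere $\{z:\|z\|_2=r\}$, the boundary of a strictly convex ball. If a permutation vector $p$ admitted a representation $p=\sum_j\lambda_j q_j$ with all $\lambda_j>0$ and $q_j$ permutation vectors of $y$, then $p$ on the sphere and each $q_j$ in the closed ball would force, by strict convexity, $q_j=p$ for every $j$. Thus $p$ has no nontrivial representation as a convex combination of permutation vectors, so $p\in Extr(\Pi_y)$. Extremality is a purely convex-geometric notion independent of the chosen norm, so using the $\ell_2$ norm here is legitimate even though the ambient metric on $Aff(\Pi_y)$ is induced by $\ell_1$.
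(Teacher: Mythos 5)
This proposition is quoted from Marshall--Olkin (\cite{Mar_Ol}) and the paper supplies no proof of its own, so there is nothing internal to compare against; your argument is, in substance, the standard textbook proof. The forward direction via convexity of $S_k(z)=\max_{|I|=k}\sum_{i\in I}z_i$ is correct, the converse via $T$-transforms (or, equivalently, Hardy--Littlewood--P\'olya plus Birkhoff) is the classical route, and the induction is sound because $\Pi_y$ is convex and invariant under coordinate permutations, so each $T$-transform step stays inside $\Pi_y$.

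The one step you should tighten is the extremality claim. You only rule out representations $p=\sum_j\lambda_j q_j$ in which the $q_j$ are \emph{permutation vectors} of $y$; being outside the convex hull of the other generators does imply extremality in the polytope, but that implication itself needs an argument (e.g.\ a separating hyperplane), and extremality as defined in this paper quantifies over arbitrary points of $\Pi_y$, not just generators. Fortunately your $\ell_2$ argument closes this gap with no extra work: since $\Pi_y$ is the convex hull of points on the sphere $\{z:\|z\|_2=r\}$, the triangle inequality gives $\Pi_y\subset\{z:\|z\|_2\le r\}$, so if $p=\lambda z_1+(1-\lambda)z_2$ with $z_1,z_2\in\Pi_y$ arbitrary and $\lambda\in(0,1)$, then $r=\|p\|_2\le\lambda\|z_1\|_2+(1-\lambda)\|z_2\|_2\le r$ forces equality throughout, and equality in the Euclidean triangle inequality for vectors of equal norm forces $z_1=z_2=p$. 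State it for arbitrary $z_1,z_2\in\Pi_y$ and the extremality assertion follows exactly as you intend; your closing remark that extremality is norm-independent is correct and worth keeping.
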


A continuous operator $V:S^{m-1}\longrightarrow S^{m-1}$  is called a $(m-1)$-dimensional stochastic operator. We call an operator $V:S^{m-1}\longrightarrow S^{m-1}$  a quadratic stochastic operator (QSO) if it has the following form:
$$V(x)_{k}=\sum_{i,j=1}^{m} p_{ij,k} x_{i} x_{j}, $$ for $k=\overline{1,m}$ ,where $x=(x_{1},x_{2},...,x_{m})\in S^{m-1}$, $p_{ij,k}=p_{ji,k}\geq 0$  , $\sum_{r=1}^m p_{ij,r}=1$, for all $i,j,k\in \{1,2,...,m\}=N_{m}$. The quadratic stochastic operator is called the evolution operator in population genetics, and the coefficients $p_{ij,k}$  are called the heredity coefficients of this operator.

It is clear that any QSO is a stochastic operator. By the form of QSO, we infer that any QSO is associated with a unique cubic stochastic matrix of a certain type in the space of real cubic matrices, $M_{m}^{c} (R)$ . By this correspondence, we can define (convex) addition between QSO.
\begin{dfn}
A stochastic operator  $V$ is called bistochastic if $V(x)\prec x$ holds for all $ x\in S^{m-1}$ . A bistochastic QSO is called a quadratic bistochastic operator (QBO). The set of all $m-$dimensional QBOs is denoted by $\mathcal{B}_m$.
\end{dfn}

 By the definition of a majorization, $\mathcal{B}_m$  is a closed set, and it is also closed with respect to the convex sum of its elements; therefore, it is a closed convex subset of $M_{m}^{c} (R)$. An extreme point of $\mathcal{B}_m$  is called extreme QBO, and some necessary conditions and some sufficient conditions for extreme QBO were found in the doctoral thesis of R.Ganikhodjaev \cite{R_G}, but a criteria have not yet been found. According to the definition of majorization, we have that $V((\frac{1}{m},\frac{1}{m},...,\frac{1}{m}))
=(\frac{1}{m},\frac{1}{m},...,\frac{1}{m})$  for a QBO $V$, in other words, the barycenter of the simplex is a fixed point for any bistochastic operator. The following theorem characterizes the main properties of bistochastic operators and $\mathcal{B}_m$.

 \begin{thm}\cite{R_G}
 Let $V:S^{m-1}\rightarrow S^{m-1}$ be a quadratic bistochastic operator then:

 i) $|\omega_{V}(x_{0})|<\infty$, for $\forall x_{0}\in S^{m-1}$, where $\omega_{V}(x_{0})$  ($\omega-$  limit set of $x_0$) is the set of limit points of $\{V^n(x_0)\}_{n=0}^{\infty}$ ;

 ii) $P\circ V$ is quadratic bistochastic operator for any coordinate permutation operator $P:S^{m-1}\rightarrow S^{m-1}$;

 iii) $|Extr(\mathcal{B}_m)|< \infty$.

 \end{thm}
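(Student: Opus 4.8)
The three parts are essentially independent, so I would treat them separately. For \textbf{(i)} the plan is to exploit the fact that bistochasticity makes the trajectory monotone in the majorization order and then to extract a convergent ``profile''. Fix $x_0$ and write $y^{(n)}=V^n(x_0)$. Since $V(y)\prec y$ for every $y$, we have $y^{(n+1)}\prec y^{(n)}$, so for each fixed $k$ the partial sum $S_k^{(n)}=\sum_{i=1}^k (y^{(n)})_{[i]}$ is non-increasing in $n$ and bounded below, hence convergent. As this holds for every $k$ and $(y^{(n)})_{[k]}=S_k^{(n)}-S_{k-1}^{(n)}$, the non-increasing rearrangement $(y^{(n)})_{\downarrow}$ converges coordinatewise to a single vector $a^*$. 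Now if $z\in\omega_V(x_0)$ then $y^{(n_j)}\to z$ along a subsequence; since rearrangement is continuous, $z_{\downarrow}=\lim (y^{(n_j)})_{\downarrow}=a^*$. Thus every point of $\omega_V(x_0)$ is a coordinate permutation of the fixed vector $a^*$, and there are at most $m!$ such permutations, giving $|\omega_V(x_0)|\le m!<\infty$. I expect no serious obstacle here; the only care needed is the continuity of the rearrangement map and the monotonicity of each $S_k^{(n)}$, both immediate from the definition of $\prec$.

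For \textbf{(ii)} I would verify directly the two requirements, being a QSO and being bistochastic. If $P$ corresponds to a permutation $\sigma$, then $(P\circ V)(x)_k=V(x)_{\sigma^{-1}(k)}=\sum_{i,j}\tilde p_{ij,k}x_ix_j$ with $\tilde p_{ij,k}=p_{ij,\sigma^{-1}(k)}$. These coefficients are still symmetric in $i,j$ and nonnegative, and $\sum_k\tilde p_{ij,k}=\sum_r p_{ij,r}=1$, so $P\circ V$ is again a QSO. For bistochasticity, note that $(P\circ V)(x)=P(V(x))$ is merely a coordinate permutation of $V(x)$ and so has the same non-increasing rearrangement; since $\prec$ depends only on this rearrangement, $(P\circ V)(x)\prec x$ is equivalent to $V(x)\prec x$, which holds. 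This part is routine.

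For \textbf{(iii)} the statement is equivalent, by Krein--Milman in the finite-dimensional space $M_m^c(R)$, to showing that $\mathcal B_m$ is a polytope, i.e. a bounded intersection of finitely many closed half-spaces. Boundedness is clear since $0\le p_{ij,k}\le 1$, and the QSO constraints $p_{ij,k}=p_{ji,k}\ge0$ and $\sum_r p_{ij,r}=1$ are already finitely many linear conditions. The whole difficulty is to show that bistochasticity, ``$V(x)\prec x$ for all $x\in S^{m-1}$'', is also equivalent to finitely many linear inequalities in the coefficients. My plan is to rewrite it as: for every $k$ and every subset $I$ with $|I|=k$, $\sum_{i\in I}V(x)_i\le\sum_{j=1}^{k}x_{[j]}$ for all $x$, and then to partition $S^{m-1}$ into the finitely many cells on which a fixed ordering of the coordinates holds. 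Each such cell is itself a simplex whose vertices are exactly the uniform points $w_A=\frac{1}{|A|}\sum_{a\in A}e_a$, and on it the right-hand side is linear while the left-hand side is a quadratic form with nonnegative coefficients. If one can show that on each cell the constraint attains its extreme value at the vertices $w_A$, then bistochasticity reduces to the finitely many linear conditions $V(w_A)\prec w_A$, and $\mathcal B_m$ is a polytope with $|Extr(\mathcal B_m)|<\infty$.

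The main obstacle is precisely this last reduction. The quadratic form $\sum_{i\in I}V(x)_i$ need not be convex on a cell, since a nonnegative symmetric matrix is not positive semidefinite on the hyperplane $\sum v_i=0$, so one cannot simply invoke that a convex or concave function is extremized at vertices. I would instead try to show that whenever the relevant quadratic is convex in some direction, its critical point is forced outside the cell by the QSO constraint structure, so that the minimum of the constraint slack over each cell is still attained at some $w_A$. Pinning down exactly which finite family of test points $\{w_A\}$ certifies bistochasticity is where the real work lies, and it is what ultimately yields the polytope structure and hence the finiteness of $Extr(\mathcal B_m)$.
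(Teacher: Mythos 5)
This theorem is quoted in the paper from Ganikhodjaev's thesis \cite{R_G} without any proof, so there is no in-paper argument to compare against; I can only assess your proposal on its own merits. Parts (i) and (ii) are correct and complete. For (i), the observation that $y^{(n+1)}\prec y^{(n)}$ makes each partial sum $S_k^{(n)}=\sum_{i=1}^k (y^{(n)})_{[i]}$ non-increasing and bounded, hence convergent, so that $(y^{(n)})_{\downarrow}\to a^*$; combined with the continuity of $x\mapsto x_{\downarrow}$ (clear since $S_k(x)=\max_{|I|=k}\sum_{i\in I}x_i$ is a maximum of finitely many linear functions), this pins every $\omega$-limit point to the permutation orbit of $a^*$ and gives $|\omega_V(x_0)|\le m!$. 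Part (ii) is routine and your verification is fine.

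Part (iii), however, contains a genuine gap which you yourself flag. Your strategy reduces finiteness of $Extr(\mathcal{B}_m)$ to showing that the condition ``$V(x)\prec x$ for all $x\in S^{m-1}$'' is equivalent to finitely many linear inequalities in the coefficients $p_{ij,k}$, and your proposed mechanism for that reduction --- extremizing the constraint slack over each ordering cell $C_\sigma$ and arguing it is attained at the face barycenters $w_A$ --- does not go through as stated, precisely because $\sum_{i\in I}V(x)_i$ is a quadratic form with nonnegative coefficients but is in general neither convex nor concave on $C_\sigma$ (already $x_1x_2$ fails), so there is no a priori reason its maximum over the cell sits at a vertex. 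The claim that testing $V(w_A)\prec w_A$ at the finitely many face barycenters certifies bistochasticity is exactly the content of Ganikhodjaev's criterion for QBOs, i.e.\ it is the substance of the theorem being proved, and it requires a separate combinatorial argument that your sketch does not supply. Your fallback idea (showing that any interior critical point of the relevant quadratic is excluded by the QSO constraints) is not carried out and is not obviously true. As it stands, (iii) is a plan, not a proof.
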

\begin{rem}
A coordinate permutation operator is an operator such that it maps a vector to its permutation vector, and the order of the permutation of the coordinate locations of which does not change when the argument vector changes.
\end{rem}
From the point of view of the theory of dynamical systems, the dynamical system of a certain operator can have very strange behavior. A simpler dynamical system among such strange dynamical systems is that each trajectory in the dynamical system converges at one point. In the theory of QSO, operators having such a simple dynamical system are called regular.
\begin{dfn}
  A QSO is called \textit{regular} if its (forward) trajectories always converge. A regular QSO is called \textit{strictly regular} if it has a unique fixed point.
\end{dfn}

Therefore, a dynamical system of strictly regular QSO is simpler than a dynamical system of regular ones. Some properties of regular QSO were studied in (\cite{First_Ex}-\cite{RFAS}). In particular, the following simple criterion of the regularity of a bistochastic operator is given in \cite{M_S_3} and \cite{RFAS}.
\begin{thm}\label{SSI} (\cite{M_S_3}, \cite{RFAS})
Let  $V:S^{m-1}\to S^{m-1}$ be a bistochastic operator, then $V$ is regular if and only if it has no periodic points except fixed points.
\end{thm}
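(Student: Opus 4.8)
The statement splits into two implications, of which only the converse carries real content. The plan is to dispose of the forward direction first: if $V$ is regular, then every trajectory converges, so for a periodic point $p$ of minimal period $k\ge 2$ the orbit $p, V(p), \dots, V^{k-1}(p)$ cycles through $k$ distinct values and cannot converge, a contradiction. Hence every periodic point is fixed, and this needs nothing beyond the definition of regularity.

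For the converse, assuming $V$ has no periodic points other than fixed points, the engine is the finiteness of the $\omega$-limit set. I would first record that bistochasticity itself forces this: for each $k$ the function $\psi_k(x)=\sum_{i=1}^{k} x_{[i]}$ (the sum of the $k$ largest coordinates) is continuous on $S^{m-1}$, and by the very definition of majorization $V(x)\prec x$ gives $\psi_k(V(x))\le \psi_k(x)$. Thus each sequence $\{\psi_k(V^n(x_0))\}_n$ is non-increasing and bounded, hence convergent, so every $y\in\omega_V(x_0)$ satisfies $\psi_k(y)=\lim_n \psi_k(V^n(x_0))$ for all $k$. Since the $\psi_k$ determine the non-increasing rearrangement, all points of $\omega_V(x_0)$ share one and the same $x_\downarrow$; as the set of vectors with a prescribed rearrangement is the finite set of permutations of a single vector, $\omega_V(x_0)$ is finite (this recovers Theorem i) and in fact extends it to arbitrary bistochastic $V$).

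Next I would use the standard fact that for a continuous self-map of the compact set $S^{m-1}$ the set $\omega_V(x_0)$ is nonempty, compact and satisfies $V(\omega_V(x_0))=\omega_V(x_0)$. A surjective self-map of a finite set is a bijection, so $V$ permutes the finitely many points of $\omega_V(x_0)$ and each of them is therefore periodic; by hypothesis each is then a fixed point, so $\omega_V(x_0)=\{p_1,\dots,p_N\}$ with $V(p_i)=p_i$.

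It remains to prove $N=1$, and this trapping step is where I expect the only genuine difficulty, since a fixed point of a bistochastic operator need not be attracting. I would argue by separation and uniform continuity: set $2r=\min_{i\ne j}\|p_i-p_j\|$, assume $N\ge 2$ (so $r>0$), and pick $\eta\in(0,r)$ with $\|x-y\|<\eta\Rightarrow\|V(x)-V(y)\|<r$. Since $\omega_V(x_0)$ is the limit set, a compactness argument gives $d\!\left(V^n(x_0),\omega_V(x_0)\right)\to 0$, so beyond some $M$ each $x_n:=V^n(x_0)$ lies within $\eta$ of a unique nearest point $p_{i(n)}$. Then $\|x_{n+1}-p_{i(n)}\|=\|V(x_n)-V(p_{i(n)})\|<r$ while $\|x_{n+1}-p_{i(n+1)}\|<\eta$, so the triangle inequality forces $\|p_{i(n)}-p_{i(n+1)}\|<r+\eta<2r$ and hence $i(n)=i(n+1)$. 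The nearest-point index is eventually constant, so $x_n\to p_{i_0}$, contradicting $N\ge 2$. Therefore $\omega_V(x_0)$ is a single fixed point, every trajectory converges, and $V$ is regular.
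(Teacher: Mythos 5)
Your proof is correct. Note, however, that the paper does not prove this theorem at all: it is imported verbatim from \cite{M_S_3} and \cite{RFAS}, so there is no internal argument to compare against. Your blind reconstruction follows the standard route of those references and of Theorem~2.1(i): the partial sums $\psi_k(x)=\sum_{i=1}^k x_{[i]}$ are continuous and non-increasing along trajectories by the definition of majorization, so all points of $\omega_V(x_0)$ share the same non-increasing rearrangement and $\omega_V(x_0)$ is a finite set (of cardinality at most $m!$); invariance of the $\omega$-limit set under a continuous map of a compact space then makes $V$ a permutation of this finite set, so its elements are periodic, hence fixed by hypothesis. The one step that carries genuine content is the final trapping argument showing $\omega_V(x_0)$ cannot contain two distinct fixed points, and you handle it correctly: with $2r=\min_{i\neq j}||p_i-p_j||$ and $\eta<r$ chosen by uniform continuity, the estimate $||p_{i(n)}-p_{i(n+1)}||<r+\eta<2r$ forces the nearest-point index to stabilize, so all but one of the $p_i$ fail to be limit points, a contradiction. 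Two cosmetic remarks: the conclusion of that step is more precisely ``$p_j\notin\omega_V(x_0)$ for $j\neq i_0$'' (from which $x_n\to p_{i_0}$ then follows, rather than being asserted directly), and your derivation of finiteness of $\omega_V(x_0)$ for arbitrary (not necessarily quadratic) bistochastic operators is exactly what the theorem's generality requires, since the paper's Theorem~2.1(i) is stated only for quadratic ones.
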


Obviously, the unique fixed point of the strictly regular bistochastic operator mentioned in the definition is the barycenter of the simplex. Then, we quickly obtain the following simple criterion for strictly regularity of bistochastic operators by Theorem~2.2.

\begin{prop}
A quadratic bistochastic operator is strictly regular if and only if it has no periodic points, except for its unique fixed point $(\frac{1}{m},\frac{1}{m},...,\frac{1}{m})$ .
\end{prop}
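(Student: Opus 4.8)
The plan is to reduce everything to the regularity criterion of Theorem~\ref{SSI} together with the standing fact that the barycenter $b=(\frac1m,\dots,\frac1m)$ is a fixed point of \emph{every} QBO. The single conceptual point to keep in mind throughout is that a fixed point is nothing but a periodic point of period one, so the fixed points of $V$ always form a subset of its periodic points, and $b$ in particular always lies in both sets.

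For the forward implication I would assume $V$ is strictly regular. By definition this means $V$ is regular and has a unique fixed point; since $b$ is always a fixed point, that unique fixed point must be $b$ itself. Applying Theorem~\ref{SSI} to the regular operator $V$ yields that $V$ has no periodic points other than fixed points. As $b$ is the only fixed point, $V$ has no periodic points other than $b$, which is exactly the desired conclusion.

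For the converse I would assume $V$ has no periodic points except $b$. Since $b$ is a fixed point, hence a periodic point, and since no other periodic points exist, the set of periodic points of $V$ equals $\{b\}$, and therefore so does the set of fixed points. In particular $V$ has no periodic points other than fixed points, so Theorem~\ref{SSI} gives that $V$ is regular; and $b$ being its unique fixed point makes $V$ strictly regular by definition.

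The only step requiring care, and the closest thing to an obstacle, is the bookkeeping in identifying ``no periodic points except fixed points'' (the hypothesis of Theorem~\ref{SSI}) with ``no periodic points except the barycenter'' (the statement of the proposition); this identification is precisely what the guaranteed presence of $b$ as a fixed point allows, and it is what turns the general regularity criterion into the sharper strictly-regular criterion stated here.
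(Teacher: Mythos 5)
Your argument is correct and follows exactly the route the paper intends: it combines Theorem~\ref{SSI} with the fact that the barycenter is a fixed point of every bistochastic operator, which is precisely how the paper derives this proposition (the paper merely states it "quickly" without writing out the two directions). Your careful bookkeeping of fixed points as period-one periodic points fills in the details the paper leaves implicit, and nothing is missing.
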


\section{Main results}  

\subsection{On the relative interior of convex polytopes} 
The goal of this subsection is to derive an algebraic expression for points in the relative interior of a convex polytope. The following theorem is suitable for our purpose.
\begin{thm}
Let $f_1,f_2,...,f_s$  be points of $R^d$  such that none lies convex hull of others and $Q=conv\{f_1,f_2,...,f_s\}$. Then $ri(Q)=\{\lambda_1 f_1+\lambda_2 f_2+...+\lambda_s f_s|   \sum_{j=1}^s \lambda_j=1, \lambda_j>0, j=\overline{1;s}\}$.
\end{thm}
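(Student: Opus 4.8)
The plan is to prove the two inclusions separately, after recording one preliminary remark. Denote by $G$ the right-hand set, $G=\{\sum_{j=1}^s\lambda_jf_j:\sum_{j=1}^s\lambda_j=1,\ \lambda_j>0\}$. Since $Q=conv\{f_1,\dots,f_s\}$, we have $Aff(Q)=Aff\{f_1,\dots,f_s\}$, so every affine combination $\sum_j\nu_jf_j$ with $\sum_j\nu_j=1$ (the $\nu_j$ real) lies in $Aff(Q)$, and such a combination lies in $Q$ as soon as all $\nu_j\ge 0$. The one standard fact I will invoke is the prolongation criterion for the relative interior: a point $x\in Q$ lies in $ri(Q)$ if (and only if) for every $y\in Q$ the segment $[y,x]$ can be extended slightly beyond $x$ without leaving $Q$, i.e. $x+\delta(x-y)\in Q$ for some $\delta>0$. (Sufficiency follows by taking a supporting hyperplane of $Q$ in $Aff(Q)$ at a hypothetical relative boundary point $x$ and choosing $y\in Q$ strictly off it; then no prolongation survives.)

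First I would show $G\subseteq ri(Q)$. Fix $x=\sum_j\lambda_jf_j\in G$ and let $\lambda_*=\min_j\lambda_j>0$. For an arbitrary $y=\sum_j\mu_jf_j\in Q$ (so $\mu_j\ge0$, $\sum_j\mu_j=1$) and $0<\delta\le\lambda_*$ compute $x+\delta(x-y)=\sum_j\nu_jf_j$ with $\nu_j=(1+\delta)\lambda_j-\delta\mu_j=\lambda_j+\delta(\lambda_j-\mu_j)$. These coefficients satisfy $\sum_j\nu_j=(1+\delta)-\delta=1$, and since $\mu_j\le 1$ we get $\nu_j\ge\lambda_j-\delta\ge\lambda_*-\delta\ge0$. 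Hence $x+\delta(x-y)\in Q$ for every $y\in Q$ (with the uniform choice $\delta=\lambda_*$), and the prolongation criterion gives $x\in ri(Q)$. Taking all $\lambda_j=1/s$ shows in particular that the barycenter $b=\tfrac1s\sum_jf_j$ lies in $ri(Q)$, a fact I will reuse.

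For the reverse inclusion $ri(Q)\subseteq G$, take $x\in ri(Q)$; if $x=b$ it is already a strictly positive combination, so assume $x\neq b$. Since $b\in Q$ and $x\in ri(Q)$, there is a relative ball $B(x,\varepsilon)\cap Aff(Q)\subseteq Q$; the point $y:=x+\delta(x-b)=(1+\delta)x-\delta b$ is an affine combination of $x,b\in Aff(Q)$, hence lies in $Aff(Q)$, and for $0<\delta<\varepsilon/\|x-b\|$ it has distance $\delta\|x-b\|<\varepsilon$ from $x$, so $y\in Q$. Writing $t=\delta/(1+\delta)\in(0,1)$ gives $x=(1-t)\,y+t\,b$. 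Expanding $y=\sum_j\mu_jf_j$ ($\mu_j\ge0$) and $b=\sum_j\tfrac1s f_j$ yields $x=\sum_j\lambda_jf_j$ with $\lambda_j=(1-t)\mu_j+t/s\ge t/s>0$ and $\sum_j\lambda_j=1$, so $x\in G$.

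The step I expect to be the genuine obstacle is the affine dependence of the $f_j$: when they are not affinely independent the representation $x=\sum_j\lambda_jf_j$ is far from unique, so one cannot simply read the coefficients $\lambda_j$ off the coordinates of $x$ and test their positivity. The whole point of the argument above is to bypass uniqueness entirely — in both directions I only ever \emph{exhibit} one admissible set of coefficients (via a prolongation in one direction, via convex-combining $y$ with the barycenter in the other) and argue through the prolongation/relative-ball characterization of $ri(Q)$ rather than through coordinates. I would also remark that the hypothesis that no $f_j$ lies in the convex hull of the others is not actually needed for the identity itself; it serves only to identify $f_1,\dots,f_s$ as precisely the extreme points of $Q$.
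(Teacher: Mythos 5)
Your proof is correct, and it rests on the same key fact as the paper's --- the ``prolongation'' characterization of $ri(Q)$, which is exactly Lemma~3.3 of the paper ($x\in ri(Q)$ iff for every $y\in Q\setminus\{x\}$ there is $z\in Q$ with $x\in(y;z)$) --- but the execution differs in two places. For the inclusion $G\subseteq ri(Q)$ you and the paper do essentially the same computation (exhibit $z$ with $x\in(y;z)$ by shrinking the coefficients of $y$ inside those of $x$). For $ri(Q)\subseteq G$ your route is genuinely different and slicker: you prolong a \emph{single} segment, from the barycenter $b=\frac1s\sum_j f_j$ through $x$, land at some $y\in Q$ using the relative ball at $x$, and then write $x=(1-t)y+tb$, the strict positivity of all coefficients coming for free from $b$. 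The paper instead applies Lemma~3.3 once for each vertex $f_j$ to get $s$ identities $x=\mu_jf_j+(1-\mu_j)z_j$, averages them, and re-expands each $z_j$ in the $f_i$ via a row-stochastic matrix; your version avoids that bookkeeping entirely. The one place where your argument is less self-contained is the sufficiency half of the prolongation criterion: you invoke the supporting hyperplane theorem in a one-line parenthesis, whereas the paper derives it elementarily from $ri(Q)\neq\emptyset$ (Lemma~3.1) together with the segment lemma $[x;y)\subseteq ri(Q)$ (Lemma~3.2); if you want a proof at the same level of completeness you should either supply the hyperplane argument or substitute the paper's two-lemma derivation. Finally, your closing remark is right: neither your proof nor the paper's ever uses the hypothesis that no $f_j$ lies in the convex hull of the others, so the identity holds for an arbitrary finite generating set; that hypothesis only serves to make $\{f_1,\dots,f_s\}=Extr(Q)$.
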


We use several lemmas in the proof of the above theorem. Let $A\subset R^d$, $x,y\in R^d$  and $\lambda \in R$, for brevity, we also use the following notations: $A+x:=\{a+x: a\in A\}$, $\lambda A:=\{\lambda a: a\in A\}$,  $[x;y]:=\{\mu x+(1-\mu)y: \mu\in [0;1]\}$. Half-open $[x;y)$, $(x;y]$ and open  $(x;y)$  intervals are defined similarly to the closed interval $[x, y]$.
\begin{lem}\label{l:first}
Let  $Q$  be a nonempty convex subset of $R^d$, then  $ri(Q)\neq\emptyset$.
\end{lem}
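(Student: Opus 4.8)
The plan is to reduce the statement to the single concrete fact that the barycenter of a full-dimensional simplex lies in the relative interior of that simplex; this last fact follows directly from the continuity of barycentric coordinates. The role of convexity is to guarantee that such a simplex actually sits inside $Q$.

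First I would set $L=Aff(Q)$ and $k=\dim(L)$. If $k=0$ then $Q$ is a single point, $ri(Q)=Q\neq\emptyset$, and we are done; so assume $k\geq 1$. The key preliminary step is to produce an affine basis of $L$ that already lies \emph{inside} $Q$. By the description \eqref{Eq0} of the affine hull, the points of $Q$ affinely span $L$; and a spanning set of a $k$-dimensional affine space always contains $k+1$ affine-independent points (translate one chosen point to the origin and extract a linear basis). Hence one may select affine-independent $a_0,a_1,\dots,a_k\in Q$, and being $k+1$ affine-independent points of the $k$-dimensional space $L$, they form an affine basis of $L$.

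Next I would set $\Delta=conv\{a_0,a_1,\dots,a_k\}$. Since $Q$ is convex and $a_0,\dots,a_k\in Q$, we have $\Delta\subseteq Q$; since the $a_i$ are affine-independent, $\Delta$ is a $k$-dimensional simplex with $Aff(\Delta)=L$. Consider the barycenter $b=\frac{1}{k+1}\sum_{i=0}^k a_i\in\Delta\subseteq Q$, and show $b\in ri(\Delta)$. Because $a_0,\dots,a_k$ is an affine basis of $L$, every $x\in L$ has unique barycentric coordinates $\mu_0(x),\dots,\mu_k(x)$ with $x=\sum_{i=0}^k \mu_i(x)a_i$ and $\sum_{i=0}^k\mu_i(x)=1$, and these coordinate functions are affine, hence continuous, on $L$. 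At the barycenter all $\mu_i(b)=\frac{1}{k+1}>0$, so by continuity there is $\varepsilon>0$ such that every $x\in L$ with $\|x-b\|<\varepsilon$ still satisfies $\mu_i(x)>0$ for all $i$; such an $x$ is a convex combination of the $a_i$, that is $x\in\Delta$. Thus a relatively open ball around $b$ is contained in $\Delta$.

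Finally, since $\Delta\subseteq Q$ and $Aff(\Delta)=Aff(Q)=L$, the very same relatively open ball (taken in $L$, with respect to the induced metric) around $b$ is contained in $Q$, which gives $b\in ri(Q)$ and hence $ri(Q)\neq\emptyset$. I expect the main obstacle to be the first step, namely guaranteeing that the affine basis can be chosen from within $Q$ rather than merely from $Aff(Q)$; once this is secured from \eqref{Eq0}, the rest is the routine continuity argument for barycentric coordinates.
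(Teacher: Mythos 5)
Your proof is correct and follows essentially the same route as the paper: extract from $Q$ an affine basis of $Aff(Q)$, form the inscribed simplex, and use continuity of the barycentric coordinates to exhibit a relatively open subset of the simplex (the paper pulls back an open set under the coordinate map $\phi$, you verify positivity of the coordinates near the barycenter — the same argument). No gaps.
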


\begin{proof}
We consider two cases to prove the lemma.

\textit{Special case:} In this case, we prove the lemma for the simplexes. Let $S\subset R^d$  be a   $d_0-$dimensional simplex (clearly $d_0\leq d$). Then according to the definition of the simplex, there are affine independent vectors $v_1, v_2,...,v_{d_0+1}\in R^d$  such that $S=conv\{v_1,v_2,...,v_{d_0+1}\}$. Consequently, $v_1, v_2,...,v_{d_0+1}$  is an affine basis for $Aff(S)$  due to (\ref{Eq0}), therefore any element $x\in Aff(S)$  can be uniquely expressed as $x=\mu_1 v_1+\mu_2 v_2+...+\mu_{d_0+1} v_{d_0+1}$   with $\mu_1+\mu_2+...+\mu_{d_0}=1$. It follows that the map $\phi:Aff(S)\longrightarrow Aff(S^{d_0})$, defined as $$\phi(\mu_1 v_1+\mu_2 v_2+...+\mu_{d_0+1} v_{d_0+1}):=(\mu_1, \mu_2,...,\mu_{d_0+1})$$ is well-defined. It is clear that $\phi$  is a bijection and a continuous map between  $S$ and the standard  $d_{0}-$dimensional simplex . Obviously,  $\mathcal{G}:=\{(\mu_1,\mu_2,...,\mu_{d_0}):\mu_j>0\}$ is open in $R^{d_0}$, hence $\mathcal{G}_1=\mathcal{G}\bigcap Aff(S^{d_0})$  is open in $Aff(S^{d_0})$. Since $\phi$  is bijection and continuous we see that  $\phi^{-1}(\mathcal{G}_1)=\{\sum_{j=1}^{d_0}\mu_j v_j: (\mu_1,\mu_2,...,\mu_{d_0})\in \mathcal{G}, \sum_{j=1}^{d_0}\mu_j =1\} $ is open set in $Aff(S)$. Now notice that $\phi^{-1}(\mathcal{G}_1)\subset S$, however  $\phi^{-1}(\mathcal{G}_1)$ is an open set. Consequently,   $\phi^{-1}(\mathcal{G}_1)\subset ri(S)$by the definition of relative interior of the set; therefore,  $ri(S)\neq\emptyset$.

\textit{General case:} Let  $Q$ be a nonempty convex subset and $d_0=dim Aff(Q)$. In the case $d_0=0$  there is nothing to prove, so we can assume that $d_0>0$. Then there are  $e_1,e_2,...,e_{d_0+1}\in Q$ such that $\{e_1,e_2,...,e_{d_0+1}\}$ is the affine basis for $Aff(Q)$  by expression (\ref{Eq0}). Let us consider the simplex $S:=conv\{e_1,e_2,...,e_{d_0+1}\}$. Then $S\subset Q$  by the convexity $Q$, so that since  $Aff(S)\subset Aff(Q)$ and $dim Aff(S)=d_0=dim Aff(Q)$, we have $Aff(S)=Aff(Q)$. According to the special case we have $ri(S)\neq \emptyset$ . From this, there exist $\exists x_0\in S$  and open neighborhood  $O_{x_0}$ of $x_0$ such that $O_{x_0}\bigcap Aff(S)\subset S$. Hence according to  $Aff(S)=Aff(Q)$ and  $S\subset Q$ we have $O_{x_0}\bigcap Aff(Q)\subset Q$. It follows from the last inclusion that $x_0$  is a relative interior point of $Q$.
\end{proof}
\begin{lem}\label{l:second}
Let $Q$  be a nonempty closed convex set in $R^d$. Then for any $x\in ri(Q)$  and any $y\in Q/\{x\}$  we have $[x;y)\subset ri(Q)$.
\end{lem}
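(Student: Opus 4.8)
The plan is to establish the classical line segment principle for relative interiors, using the ball characterization of $ri(Q)$ inherited from the induced metric on $Aff(Q)$. Since $x\in ri(Q)$, there is some $\varepsilon>0$ such that every $u\in Aff(Q)$ with $||u-x||<\varepsilon$ already lies in $Q$. I would then fix an arbitrary point $z\in[x;y)$, say $z=\lambda x+(1-\lambda)y$ with $\lambda\in(0;1]$. Because $x,y\in Q$ and $Q$ is convex, $z\in Q\subset Aff(Q)$, and the goal becomes to exhibit a relative neighborhood of $z$ contained in $Q$; concretely, I aim to prove that $\{w\in Aff(Q):||w-z||<\lambda\varepsilon\}\subset Q$.

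The key step, and the main technical point, is a rescaling trick that produces the radius $\lambda\varepsilon$. Given any $w\in Aff(Q)$ with $||w-z||<\lambda\varepsilon$, I would introduce the auxiliary point $x':=\frac{1}{\lambda}\bigl(w-(1-\lambda)y\bigr)$, so that by construction $w=\lambda x'+(1-\lambda)y$. First I would verify that $x'\in Aff(Q)$: its defining coefficients $\frac{1}{\lambda}$ and $-\frac{1-\lambda}{\lambda}$ sum to $1$, so $x'$ is an affine combination of the points $w,y\in Aff(Q)$ and hence lies in $Aff(Q)$ by (\ref{Eq0}). Next, substituting $(1-\lambda)y=z-\lambda x$ gives the identity $x'-x=\frac{1}{\lambda}(w-z)$, whence $||x'-x||=\frac{1}{\lambda}||w-z||<\varepsilon$. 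By the choice of $\varepsilon$ this forces $x'\in Q$.

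Having placed $x'\in Q$, I would conclude by observing that $w=\lambda x'+(1-\lambda)y$ is a convex combination of the two points $x',y\in Q$ (the coefficients are nonnegative and sum to $1$ since $\lambda\in(0;1]$), so $w\in Q$ by convexity. As $w$ was an arbitrary point of $Aff(Q)$ within distance $\lambda\varepsilon$ of $z$, this shows the relative ball of radius $\lambda\varepsilon$ about $z$ is contained in $Q$, i.e.\ $z\in ri(Q)$. Since $z\in[x;y)$ was arbitrary, the desired inclusion $[x;y)\subset ri(Q)$ follows.

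I expect the only delicate points to be the scaling of the neighborhood radius by the factor $\lambda$ and the check that the auxiliary point $x'$ remains in $Aff(Q)$; everything else is a direct appeal to convexity and the definition of the relative interior. I note in passing that closedness of $Q$ is not actually invoked in this argument, since $y$ is assumed to lie in $Q$ itself rather than merely in its closure.
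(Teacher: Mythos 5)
Your proposal is correct and is essentially the paper's own argument: both push the relative neighborhood of $x$ through the affine map $u\mapsto\lambda u+(1-\lambda)y$ and conclude by convexity of $Q$, the only difference being that you make the radius $\lambda\varepsilon$ explicit where the paper simply asserts that the image $\lambda\mathcal{O}_x+(1-\lambda)y$ is open. Your closing observation that closedness of $Q$ is never used is also accurate.
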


\begin{proof}
Let  $x_{\lambda}=\lambda x+(1-\lambda)y$ be a point in $(x;y)$. Since $x\in ri(Q)$, it follows that there exists an open neighborhood $\mathcal{O}_x$  of  $x$ such that $\mathbb{O}_x:=\mathcal{O}_x\bigcap Aff(Q)\subset Q$. We have that $\lambda \mathcal{O}_x + (1-\lambda) y$  is open set by the continuity of both addition and multiplying to scalar of vectors. Therefore, $(\lambda \mathcal{O}_x + (1-\lambda) y)\bigcap Aff(Q)= \lambda \mathbb{O}_x + (1-\lambda) y$   is open set in $Aff(Q)$, and hence it is open neighborhood of $x_{\lambda}$  in $Aff(Q)$. $\lambda \mathbb{O}_x + (1-\lambda) y\subset Q$ holds by the convexity of $Q$ , and thus we get  $x_{\lambda}\in ri(Q)$.
\end{proof}

\begin{cor}\label{ImCorr1}
For any nonempty closed convex subset $Q$  of $R^d$  we have $Q=\overline{ri(Q)}$.
\end{cor}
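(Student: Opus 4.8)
The plan is to establish the two inclusions $\overline{ri(Q)}\subseteq Q$ and $Q\subseteq\overline{ri(Q)}$ separately, after which equality is immediate. The first inclusion is the easy one: since $ri(Q)\subseteq Q$ by the very definition of the relative interior, and $Q$ is closed by hypothesis, passing to closures gives $\overline{ri(Q)}\subseteq\overline{Q}=Q$.

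For the reverse inclusion the idea is to exhibit every point of $Q$ as a limit of relative interior points, which is exactly what the segment lemma is built to deliver. First I would invoke Lemma~\ref{l:first} to guarantee that $ri(Q)\neq\emptyset$, and fix some $x\in ri(Q)$; this nonemptiness is the ingredient that makes the whole approximation possible. Now take an arbitrary $y\in Q$. If $y=x$ then $y\in ri(Q)\subseteq\overline{ri(Q)}$ and there is nothing to do, so assume $y\neq x$. Then Lemma~\ref{l:second} yields $[x;y)\subseteq ri(Q)$, meaning that every point $x_{\mu}=\mu x+(1-\mu)y$ with $\mu\in(0;1]$ lies in $ri(Q)$. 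Letting $\mu\to 0^{+}$, the points $x_{\mu}$ converge (in the $l_1$ metric, or any equivalent norm) to $y$, so $y$ is a limit of points of $ri(Q)$ and hence $y\in\overline{ri(Q)}$. Since $y\in Q$ was arbitrary, this gives $Q\subseteq\overline{ri(Q)}$, and combining with the first inclusion produces $Q=\overline{ri(Q)}$.

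I do not anticipate a genuine obstacle here, because the corollary is essentially a repackaging of the two preceding lemmas rather than a result requiring fresh ideas. The single point that deserves explicit care is that the segment argument cannot even be started unless $ri(Q)$ is nonempty, so the appeal to Lemma~\ref{l:first} is not cosmetic but load-bearing; one should also note that closedness of $Q$ is used twice, once to force $\overline{ri(Q)}\subseteq Q$ and once implicitly as a hypothesis of Lemma~\ref{l:second}.
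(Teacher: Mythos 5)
Your proof is correct and follows essentially the same route as the paper: both establish $\overline{ri(Q)}\subseteq Q$ from closedness, then obtain the reverse inclusion by taking $x\in ri(Q)$ via Lemma~\ref{l:first}, applying Lemma~\ref{l:second} to get $[x;y)\subseteq ri(Q)$, and observing that points of this segment accumulate at $y$. Your explicit handling of the case $y=x$ is a small extra care the paper omits, but the argument is the same.
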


\begin{proof}
$\overline{ri(Q)}\subset Q$ is obvious by the fact that $Q$  is a closed set. Therefore, we should prove that $Q\subset\overline{ri(Q)}$. Let $y\in Q$. There is $\exists x\in ri(Q)$ thanks to Lemma~3.1. We have $[x;y)\subset ri(Q)$  (*) by the Lemma~3.2. We consider an open ball  $\mathcal{O}_y$  centered at $y$, then $\mathcal{O}_y\bigcap [x;y)\neq \emptyset$  and $\mathcal{O}_y\bigcap[x;y)\subset ri(Q)$ (the latter inclusion follows from (*)). From this, we have $y\in \overline{ri(Q)}$.
\end{proof}
\begin{lem}\label{l:third}
For any convex closed set $Q$  in $R^d$  and any point $x\in Q$  the following two conditions are equivalent:

i)	$x\in ri (Q)$;

ii)	For  $\forall y\in Q/\{x\}$, there is $z\in Q$  such that $x\in (y;z)$.

\end{lem}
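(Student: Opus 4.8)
The plan is to prove the two implications separately, leaning on Lemma~\ref{l:first} and Lemma~\ref{l:second}, which between them already encode the geometry needed; no new machinery should be required.

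For the implication \textit{i)} $\Rightarrow$ \textit{ii)} I would argue by extending the segment $[y;x]$ slightly beyond $x$. Fix $x\in ri(Q)$ and an arbitrary $y\in Q\setminus\{x\}$. By the definition of relative interior there is an open neighbourhood $\mathcal{O}_x$ of $x$ with $\mathbb{O}_x:=\mathcal{O}_x\cap Aff(Q)\subset Q$. For a small parameter $t>0$ I set $z:=(1+t)x-ty$. Since the coefficients $1+t$ and $-t$ sum to $1$ and $x,y\in Aff(Q)$, the point $z$ is an affine combination of two points of $Aff(Q)$, hence $z\in Aff(Q)$. Because $\|z-x\|=t\,\|x-y\|\to 0$ as $t\to 0^{+}$, for $t$ small enough we have $z\in\mathcal{O}_x$ and therefore $z\in\mathbb{O}_x\subset Q$. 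Finally, rearranging $z=(1+t)x-ty$ gives $x=\frac{1}{1+t}z+\frac{t}{1+t}y$, a strict convex combination of $y$ and $z$, so $x\in(y;z)$ as required. The one place needing care is that the extension past $x$ stays inside $Q$; this is exactly where the relative-interior hypothesis is used, through the inclusion $\mathbb{O}_x\subset Q$ together with the fact that the affine line through $x$ and $y$ lies in $Aff(Q)$.

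For the implication \textit{ii)} $\Rightarrow$ \textit{i)} I would bootstrap from a point already known to be relatively interior. By Lemma~\ref{l:first} the set $ri(Q)$ is nonempty, so I can fix some $w\in ri(Q)$. If $w=x$ there is nothing to prove, so assume $w\neq x$; then $w\in Q\setminus\{x\}$ and hypothesis \textit{ii)} yields a point $z\in Q$ with $x\in(w;z)$. Since $x\neq w$, the relation $x\in(w;z)$ forces $z\neq w$, i.e. $z\in Q\setminus\{w\}$. Applying Lemma~\ref{l:second} to the relative interior point $w$ and the point $z\in Q\setminus\{w\}$ gives $[w;z)\subset ri(Q)$, and since $x\in(w;z)\subset[w;z)$ I conclude $x\in ri(Q)$.

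The two halves are of quite different character: the reverse direction is almost immediate once Lemmas~\ref{l:first} and~\ref{l:second} are in hand, whereas the forward direction is a short explicit computation. Consequently I expect the main (if modest) obstacle to lie in the forward implication, namely verifying cleanly that the extended point $z$ simultaneously belongs to $Aff(Q)$ and to the neighbourhood $\mathbb{O}_x$; both facts are routine but must be recorded carefully, since the desired conclusion $x\in(y;z)$ is only meaningful once $z$ is genuinely seen to lie in $Q$.
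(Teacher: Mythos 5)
Your proposal is correct and follows essentially the same route as the paper: the forward implication extends the segment from $y$ beyond $x$ by a small amount kept inside a relatively open neighbourhood of $x$ (the paper just fixes $t=\delta/(2\|y-x\|)$ explicitly rather than saying ``$t$ small enough''), and the reverse implication combines Lemma~\ref{l:first} with Lemma~\ref{l:second} exactly as the paper does. Your handling of the degenerate case $w=x$ is in fact slightly more careful than the paper's own write-up.
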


\begin{proof}
$[i)\Rightarrow ii)]:$ Let $x\in ri(Q)$, then there is  $\exists B_{\delta}(x)$ open ball with centered at $x$  that
\begin{equation}\label{EssIN}
B_{\delta}(x)\bigcap Aff(Q)\subset Q
\end{equation}

We take a point $y$  in $Q/\{x\}$. Since $y\neq x$, we have $||y-x||\neq 0$, and then we can consider the following vector
$$z= -\frac{\delta}{2||y-x||} y+(1+\frac{\delta}{2||y-x||}) x. $$

One the one hand,  $z$ belongs to $Aff(Q)$  as an affine combination of  $x$ and $y$ . On the other hand, $||z-x||=\delta/2<\delta$, thus $z\in B_{\delta}(x)$. Then we have $z\in Q$ by the (\ref{EssIN}). It is easily followed that
$$x=\frac{2||y-x||}{\delta + 2||y-x||} z+\frac{\delta}{\delta + 2||y-x||}y\in (y;z)$$
by the determination of $z$.

$[ii)\Rightarrow i)]:$ We assume that $x\in Q$  is a point which satisfies the condition of the second statement. Since $ri(Q)\neq \emptyset$  (by the Lemma~3.1) we can take a point $y$  in $ri(Q)$. Then there exists  $\exists z\in Q$ that $x\in (y;z)$. According to Lemma3.2  $(y;z)\subset ri(Q)$. Hence we have $x\in ri (Q)$.
\end{proof}

With the above three lemmas at hand we can now pass to the proof of Theorem~3.1.
\begin{proof} (Theorem~3.1)
First, we show that any point  $x$ in $ri(Q)$  can be expressed as a convex combination of $f_1,f_2,...,f_s$  in which the convex combination includes every $f_j$  with a positive coefficient. Since Lemma~3.3 we have $f_j\notin ri(Q)$  for $j=\overline{1,s}$, and thus $x\notin \{f_1,f_2,...,f_s\}=Extr Q$. After this, we successively consider the extreme points $f_1,f_2,...,f_s$. Lemma~3.3 implies the existence of distinct points  $z_1,z_2,...,z_s$ in  $Q$ such that $x\in (f_j;z_j)$  for $j=\overline{1,s}$. Algebraically, this means that there are $\exists \mu_1, \mu_2, ..., \mu_s\in (0;1)$  which
\begin{equation}\label{EssEqs}
x=\mu_j f_j+(1-\mu_j) z_j
\end{equation}
for $j=\overline{1,s}$. Then by these equations we have
\begin{equation} \label{EssEqs2}
x=\frac{1}{s}\sum_{j=1}^s (\mu_j f_j+(1-\mu_j)z_j)=\sum_{j=1}^s \frac{\mu_j}{s} f_j+\sum_{j=1}^s \frac{(1-\mu_j)}{s} z_j.
\end{equation}

Since $Q=conv\{f_1,f_2,...,f_s\}$, each $z_j$  is a convex combination of extreme points $f_1,f_2,...,f_s$. Symbolically, this means that there is a row stochastic matrix $\{\nu_{ji}\}_{j,i=\overline{1,s}}$  such that  $z_j=\sum_{i=1}^s \nu_{ji} f_i$ for all $j$. After replacing each $z_j$ in (\ref{EssEqs2}) with its expression via $f_1,f_2,...,f_s$  we have
\begin{equation}\label{EssEqs3}
x=\sum_{j=1}^s (\frac{\mu_j}{s}+\sum_{k=1}^s \frac {(1-\mu_k)}{s} \nu_{kj}) f_j.
\end{equation}
Clearly, $\frac{\mu_j}{s}+\sum_{k=1}^s \frac {(1-\mu_k)}{s} \nu_{kj}\geq\frac{\mu_j}{s}>0$ and therefore, (\ref{EssEqs3}) is the desired convex combination for $x$.

Now we prove the remaining part of the theorem. Let us assume that $x$  is described as a convex combination of all extreme points as  $x=\lambda_1 f_1+\lambda_2 f_2+...+\lambda_s f_s$ with $\lambda_j>0$  for $j=\overline{1;s}$. Let $y\in Q/\{x\}$, then there are $\exists \sigma_1, ..., \sigma_s\in [0;1)$ with $\sum_{i=1}^s\sigma_i=1$  that $y=\sum_{i=1}^s\sigma_i f_i$. We take $\epsilon:=\frac{min\{\lambda_1,...,\lambda_j\}}{2\cdot max\{\sigma_1,...,\sigma_j\}}>0$  and thus $\lambda_j-\sigma_j\cdot \epsilon\geq \sigma_j\cdot \epsilon>0$  for all $j$. Clearly, $\epsilon<1$ and hence each $\delta_j:=\frac{\lambda_j-\sigma_j\cdot \epsilon}{1-\epsilon}$ is positive and $\delta_1+...+\delta_s=1$. Let us consider $z:=\delta_1 f_1+...+\delta_s f_s$. Then $z\in Q$,  by the its expression via $f_1,...,f_s$, and thus we have
$$(1-\epsilon)z+\epsilon y= \sum_{j=1}^s (\lambda_j-\sigma_j\epsilon) f_j+\epsilon (\sum_{j=1}^s \sigma_j f_j)=\sum_{j=1}^s \lambda_j f_j=x.$$
Hence, by the second statement of Lemma~3.3, we conclude that  $x\in ri(Q)$.

\end{proof}

\subsection{Description of the set of strictly regular quadratic bistochastic operators} Let us consider a convex combination of bistochastic operators $V=\sum_{i=1}^t \lambda_i V_i$, where $\sum_{i=1}^t \lambda_i=1$  and $\lambda_i>1$  for all $i$. Then the following theorem describes the nature of the connection of the dynamical system $V$  with the dynamical systems $V_1,V_2,...,V_t$  and it is the main theorem of the paper.
\begin{thm}
Let $V_1, V_2, ..., V_t$  be   $m$-dimensional bistochastic operators and  \\$\lambda_{1},\lambda_{2},..., \lambda_{t}$ be positive numbers with $\sum_{i=1}^t \lambda_i=1$. Then the following statements hold:

i) $Fix(\sum_{i=1}^t \lambda_{i} V_i)=\bigcap_{i=1}^t Fix(V_i)$

ii)	 $Per_p(\sum_{i=1}^t \lambda_{i} V_i)\subset \bigcap_{i=1}^t Per_p (V_i)$, for all $ p \in N$, where $Fix(V)$  and $Per_p (V)$ are the set of the fixed points and the periodic points of prime period $p$  for  $V$, respectively.

iii) In the general case, the converse of the second statement does not hold.
\end{thm}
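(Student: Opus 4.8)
The plan is to derive all three parts from the majorization description of bistochasticity together with Proposition~2.1 on the sets $\Pi_y$. The one fact I will use repeatedly is this: for any bistochastic operator $W$ and any point $x$ we have $W(x)\prec x$, i.e. $W(x)\in\Pi_x$; and by Proposition~2.1 every permutation vector of $x$ — in particular $x$ itself, as the identity permutation — is an \emph{extreme point} of $\Pi_x$. Coupling this with the defining property of an extreme point (if an extreme point of a convex set is written as a convex combination of points of that set with strictly positive weights, it must coincide with each of them) will let me recover the individual values $V_i(x)$ from the single value $V(x)=\sum_i\lambda_i V_i(x)$. Note also that $V=\sum_i\lambda_i V_i$ is itself bistochastic, being a convex combination of bistochastic operators, so $V(x)\prec x$ as well.

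For i), the inclusion $\bigcap_i Fix(V_i)\subseteq Fix(\sum_i\lambda_i V_i)$ is immediate from linearity of the convex sum. For the reverse inclusion, let $x\in Fix(V)$ with $V=\sum_i\lambda_i V_i$. Then $x=\sum_{i=1}^t\lambda_i V_i(x)$ exhibits $x$ as a convex combination, with weights $\lambda_i>0$ summing to $1$, of the points $V_i(x)\in\Pi_x$. Since $x$ is an extreme point of $\Pi_x$, the extreme-point property forces $V_i(x)=x$ for every $i$, hence $x\in\bigcap_i Fix(V_i)$.

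For ii), the crucial step — and the main obstacle — is showing that the points of a $V$-periodic orbit are mutual permutations, which is exactly what turns each iterate into an extreme point of the relevant $\Pi$. Let $x\in Per_p(V)$ and write its orbit as $x_0=x,\ x_1=V(x_0),\ \dots,\ x_{p-1},\ x_p=x_0$. Bistochasticity of $V$ gives $x_{k+1}\prec x_k$ for every $k$ (indices mod $p$), so reading around the cycle yields $x_0\succ x_1\succ\cdots\succ x_{p-1}\succ x_0$. Thus $x_k$ and $x_0$ majorize each other for each $k$; since they all lie in $S^{m-1}$ (so their coordinate sums agree), this forces $(x_k)_{\downarrow}=(x_0)_{\downarrow}$, i.e. all $x_k$ share the same non-increasing rearrangement. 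Hence each $x_{k+1}$ is a permutation vector of $x_k$, so by Proposition~2.1 it is an extreme point of $\Pi_{x_k}$. Applying the extreme-point argument of part i) at each stage to $x_{k+1}=V(x_k)=\sum_i\lambda_i V_i(x_k)$, with $V_i(x_k)\in\Pi_{x_k}$, gives $V_i(x_k)=x_{k+1}=V(x_k)$ for all $i$ and all $k$. Therefore every $V_i$ reproduces the orbit $x_0,\dots,x_{p-1}$ verbatim, and since these points are distinct (prime period $p$ under $V$), $x$ has prime period exactly $p$ under each $V_i$, giving $x\in\bigcap_i Per_p(V_i)$.

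For iii) I will supply a counterexample with $t=2$ and $m=3$. Take the coordinate-permutation operators $V_1=P_{(1\,2)}$ and $V_2=P_{(2\,3)}$ (each bistochastic, since it sends $x$ to a permutation of itself) with weights $\lambda_1=\lambda_2=\frac12$. Any $x=(a,b,c)\in S^2$ with pairwise distinct coordinates is moved by each transposition and restored after two steps, so $x\in Per_2(V_1)\cap Per_2(V_2)$. However $V=\frac12(V_1+V_2)$ acts as the linear doubly stochastic map with rows $(\frac12,\frac12,0)$, $(\frac12,0,\frac12)$, $(0,\frac12,\frac12)$, whose eigenvalues are $1,\frac12,-\frac12$; the only eigenvalue of modulus $1$ is $1$, with eigenvector the barycentric direction $(1,1,1)$, so every trajectory in $S^2$ converges to the barycenter and no such $x$ is $V$-periodic. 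Thus $x\in Per_2(V_1)\cap Per_2(V_2)$ while $x\notin Per_2(V)$, refuting the converse of ii). I expect the only genuinely delicate point to be the verification in ii) that the periodic iterates are mutual permutations; once that is established, the remainder is routine bookkeeping with the extreme-point property.
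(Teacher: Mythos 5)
Your proposal is correct and follows essentially the same route as the paper: both parts i) and ii) rest on writing $V(x)=\sum_i\lambda_i V_i(x)$ as a positive convex combination of points of $\Pi_x$ and invoking Proposition~2.1 to see that the relevant orbit point is an extreme point of that set (your mutual-majorization argument showing the periodic iterates share a common decreasing rearrangement is just a rephrasing of the paper's chain $\Pi_{x_0}\subset\cdots\subset\Pi_{x_p}=\Pi_{x_0}$), and your counterexample in iii) is exactly the paper's pair of transposition operators, for which you supply the spectral verification the paper leaves to the reader.
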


\begin{proof}
Without loss of generality, we can assume that $t = 2$, because the case $t>2$  can easily follows from this case by using mathematical induction principle on $t$.

i) $Fix(\lambda_{1} V_1+\lambda_{2}V_2)\supset Fix(V_1)\bigcap Fix(V_2)$ is obvious, therefore, showing   $Fix(\lambda_{1} V_1+\lambda_{2}V_2)\subset Fix(V_1)\bigcap Fix(V_2)$ is sufficient. Let $x\in Fix(\lambda_{1} V_1+\lambda_{2}V_2)$  that is $\lambda_{1} V_1(x)+\lambda_{2}V_2(x)=x$  (**). Then, we have $V_{1}(x), V_{2}(x)\in \Pi_{x}$ by the bistochasticity of the operators $V_1$ and $V_2$. But $x$  is extreme point of the convex set, $\Pi_x$,  by Proposition~2.1. Then, by the definition of extreme point and (**), we have that $V_{1}(x)=V_{2}(x)=x$.

ii) Let $V=\lambda_1 V_1+\lambda_2 V_2$. We take  $x_{0}\in Per_{p} (V)$ and denote the periodic orbit of $x_0$   by $x_{i}:=V^{i}(x_{0})$ . We also set up the following notations: $y_{i}=V_{1}(x_{i-1})$ and  $z_{i}=V_{2}(x_{i-1})$ for $i=\overline{1,p}$. Since bistochasticity of the operators $V$, $V_1$ and $V_2$,  we have $x_{i},y_{i},z_{i}\in \Pi_{x_{i-1}}$, and $\Pi_{x_{0}}\subset\Pi_{x_{1}}\subset...\subset\Pi_{x_{p}}=\Pi_{x_{0}}$ (due to  $x_p=x_0$) by the definition of these sets. Hence, these sets are equal to each other. Proposition 2.1 implies that each $x_i$ is extreme point of $\Pi_{x_{i}}=\Pi_{x_{0}}$. By the definition of extreme point and according to the equality $\lambda_{1}y_{i}+\lambda_{2}z_{i}=x_{i}\in \Pi_{x_{0}}$ , we get $y_{i}=z_{i}=x_{i}$, for all $i$. Therefore, the three trajectories of  $x_0$  under iteration of  $V$, $V_1$ and $V_2$  are the same and thus each of them is   $p-$periodic. Hence $x_{0}\in Per_{p}(V_{1})\bigcap Per_{p}(V_{2})$.

iii) We take linear bistochastic operators on  $S^2$ that are given by their matrices in the standard basis as:
$$P_1:=\begin{pmatrix} 0&1&0\\
1&0&0\\
0&0&1
\end{pmatrix},
P_2:=\begin{pmatrix} 1&0&0\\
0&0&1\\
0&1&0
\end{pmatrix}.$$
It is easy to verify that these operators are a counterexample to the converse of the second statement.
\end{proof}
\begin{rem} It is worth pointing out that in the proof of the above theorem we do not use the algebraic form of the considered operators. Therefore, in the statement of this theorem, we assert only the bistochasticity of the operators.
\end{rem}
\begin{cor}
Let $V_{1}$  be a strictly regular QBO and $V_{2}$  be a QBO, then $V_\lambda=\lambda V_{1}+(1-\lambda)V_{2}$  is a strictly regular QBO for each point $\lambda$ of $(0;1)$. In particular, the set of strictly regular QBO is convex.
\end{cor}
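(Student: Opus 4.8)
The plan is to deduce the corollary directly from Theorem~3.2 together with Proposition~2.2, so the argument should be short. Recall that by Proposition~2.2 a QBO is strictly regular exactly when it has no periodic points of any prime period except the barycenter fixed point $c=(\frac{1}{m},\dots,\frac{1}{m})$. Since $V_1$ is strictly regular, its only fixed point is $c$ (every bistochastic operator fixes $c$, and strict regularity forces uniqueness), and $V_1$ has no nontrivial periodic points at all. I want to transfer these two facts to $V_\lambda=\lambda V_1+(1-\lambda)V_2$ for $\lambda\in(0;1)$.

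First I would handle the periodic points. Fix any prime period $p\in\mathbb{N}$ and apply part (ii) of Theorem~3.2 with $t=2$, the two operators $V_1,V_2$, and weights $\lambda,1-\lambda$ (both positive since $\lambda\in(0;1)$). This gives
\begin{equation*}
Per_p(V_\lambda)\subset Per_p(V_1)\cap Per_p(V_2)\subset Per_p(V_1).
\end{equation*}
For $p\geq 2$ strict regularity of $V_1$ means $Per_p(V_1)=\emptyset$, hence $Per_p(V_\lambda)=\emptyset$ as well. Thus $V_\lambda$ has no periodic points of prime period $\geq 2$.

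Next I would pin down the fixed points, i.e.\ the case $p=1$. By part (i) of Theorem~3.2,
\begin{equation*}
Fix(V_\lambda)=Fix(V_1)\cap Fix(V_2).
\end{equation*}
Because $V_1$ is strictly regular, $Fix(V_1)=\{c\}$, so the intersection is contained in $\{c\}$; and since $c$ is a fixed point of every bistochastic operator, $c\in Fix(V_1)\cap Fix(V_2)$. Therefore $Fix(V_\lambda)=\{c\}$, so $V_\lambda$ has a unique fixed point, namely the barycenter. Combining this with the previous paragraph, $V_\lambda$ has no periodic points except its unique fixed point $c$, which by Proposition~2.2 is precisely the statement that $V_\lambda$ is strictly regular. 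Finally, for the convexity claim, I would note that this argument shows the segment from any strictly regular QBO $V_1$ to an arbitrary QBO $V_2$ lies in the strictly regular set (at the open endpoints); applying it with both $V_1,V_2$ strictly regular shows the whole open segment between any two strictly regular QBOs is strictly regular, and the endpoints are strictly regular by hypothesis, giving convexity of the set.

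I do not anticipate a genuine obstacle here, since the heavy lifting is already done in Theorem~3.2; the only point requiring care is the bookkeeping of what ``strictly regular'' supplies—specifically that it yields both $Fix(V_1)=\{c\}$ and $Per_p(V_1)=\emptyset$ for $p\geq 2$ via Proposition~2.2—and the observation that $V_\lambda$ is itself a QBO (a convex combination of QBOs, hence in the convex set $\mathcal{B}_m$) so that Proposition~2.2 legitimately applies to it. The mildest subtlety is ensuring the convexity conclusion covers the endpoints, which is immediate since both endpoints are assumed strictly regular.
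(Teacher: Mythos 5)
Your proposal is correct and follows essentially the same route as the paper: apply Theorem~3.2(i) to pin down $Fix(V_\lambda)=\{(\frac{1}{m},\dots,\frac{1}{m})\}$, Theorem~3.2(ii) to rule out periodic points of prime period $p\geq 2$, and then invoke Proposition~2.2. Your additional remarks (that the barycenter lies in $Fix(V_1)\cap Fix(V_2)$, and the endpoint bookkeeping for convexity) are just slightly more explicit versions of steps the paper leaves implicit.
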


\begin{proof}
According to Proposition 2.2, we have $Fix(V_{1})=(\frac{1}{m},\frac{1}{m},...,\frac{1}{m})$  and \\ $Per_p(V_1)=\emptyset$ for all $p\geq 2$. Then by Theorem~3.2 we get $Fix(V_{\lambda})=(\frac{1}{m},\frac{1}{m},...,\frac{1}{m})$ and   $Per_p(V_\lambda)=\emptyset$ for each $p\geq 2$. Therefore, we apply again Proposition~2.2 and obtain strictly regularity of  $V_{\lambda}$.
\end{proof}

As mentioned above, the set of $m$-dimensional quadratic bistochastic operators, $\mathcal{B}_m$, is a convex compact set. Therefore, by the Krein-Milman theorem we have
\begin{equation}\label{EssEqs4}
\mathcal{B}_m=\overline{conv(Extr(\mathcal{B}_m))}.
\end{equation}
Similarly, the set of linear bistochastic operators (the Birkhoff polytope) is also a convex compact set, and the Birkhoff-von Neumann theorem states that the extreme points of this set are finite and are coordinate permutation operators. Obviously, the linear bistochastic operator is also a QBO. Now we show that the coordinate permutation operators are also extreme points of the larger set $\mathcal{B}_m$.
\begin{lem}
Let $P$  be a coordinate permutation operator, then $P\in Extr(\mathcal{B}_m)$.
\end{lem}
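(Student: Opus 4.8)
The plan is to first record that $P$ itself belongs to $\mathcal{B}_m$, and then to show that any way of writing $P$ as a convex combination of two QBOs forces both summands to coincide with $P$; this is precisely the assertion that $P$ is an extreme point. For the membership, observe that $P(x)$ is a permutation vector of $x$, so its non-increasing rearrangement equals that of $x$; hence $P(x)\prec x$ for every $x\in S^{m-1}$, and since $P$ is linear it is a QBO by the observation (recalled just before the lemma) that every linear bistochastic operator is a QBO. Thus $P\in\mathcal{B}_m$ and it is meaningful to ask whether $P\in Extr(\mathcal{B}_m)$.

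Next I would take an arbitrary representation $P=\lambda V_1+(1-\lambda)V_2$ with $\lambda\in(0;1)$ and $V_1,V_2\in\mathcal{B}_m$, and argue pointwise. Fix $x\in S^{m-1}$. By bistochasticity of $V_1$ and $V_2$ we have $V_1(x),V_2(x)\in\Pi_x$, while $P(x)$ is a permutation vector of $x$ and therefore an extreme point of $\Pi_x$ by Proposition~2.1. Evaluating the assumed identity at $x$ gives $P(x)=\lambda V_1(x)+(1-\lambda)V_2(x)$, so the extreme point $P(x)$ of $\Pi_x$ is exhibited as a convex combination, with positive weights, of the two points $V_1(x),V_2(x)$ of $\Pi_x$. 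By the definition of an extreme point this forces $V_1(x)=V_2(x)=P(x)$.

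Since $x$ was arbitrary, $V_1=V_2=P$ as operators on the simplex, so no nontrivial convex decomposition of $P$ exists and $P\in Extr(\mathcal{B}_m)$. I expect no serious obstacle: the argument is a per-point application of Proposition~2.1 and is structurally identical to the proof of Theorem~3.2~i), the only real point to get right being the observation that a permutation vector of $x$ is an extreme point of $\Pi_x$, which converts the global extremality question for $P$ in $\mathcal{B}_m$ into the elementary extremality of $P(x)$ in $\Pi_x$. The one bookkeeping subtlety is to confirm that the equality $V_1=V_2=P$ of functions on the simplex indeed yields equality of the associated cubic stochastic matrices, which is guaranteed by the bijective correspondence between QSOs and their matrices recalled in the preliminaries.
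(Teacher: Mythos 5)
Your proof is correct, but it follows a genuinely different route from the paper's. The paper first proves that the identity operator is an extreme point of $\mathcal{B}_m$ (using only the part of Proposition~2.1 saying that $x$ itself is an extreme point of $\Pi_x$), and then transfers this to a general coordinate permutation operator $P$ by rewriting $\lambda V_1+(1-\lambda)V_2=P$ as $\lambda (P^{-1}\circ V_1)+(1-\lambda)(P^{-1}\circ V_2)=id$, which requires the linearity and invertibility of $P$ together with Theorem~2.1~ii) to guarantee that $P^{-1}\circ V_i$ are again QBO. You instead argue directly at each point $x$: since $P(x)$ is a permutation vector of $x$, the second assertion of Proposition~2.1 makes $P(x)$ an extreme point of $\Pi_x$, so the identity $P(x)=\lambda V_1(x)+(1-\lambda)V_2(x)$ with $V_1(x),V_2(x)\in\Pi_x$ forces $V_1(x)=V_2(x)=P(x)$. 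Your argument is shorter and more self-contained --- it dispenses with the reduction to the identity and with Theorem~2.1~ii) entirely, at the price of invoking the full strength of Proposition~2.1 (extremality of \emph{every} permutation vector of $x$, not just of $x_{\downarrow}$ reordered as $x$); the paper's two-step version isolates the identity case and exhibits a transfer-by-symmetry technique that reuses the closedness of $\mathcal{B}_m$ under composition with permutations. Your closing remarks --- the verification that $P\in\mathcal{B}_m$ and the observation that equality of the operators on the simplex yields equality of the associated cubic matrices --- are both sound and, if anything, slightly more careful than the paper, which leaves both points implicit.
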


\begin{proof}
First, we show that the identity operator is an extreme QBO. The bistochasticity of the identity operator is obvious and let  $\lambda V_{1}+(1-\lambda) V_{2}= id$ for some $\lambda \in (0;1)$, and  $V_{1}, V_{2}\in \mathcal{B}_m$. Then $\lambda V_{1}(x)+(1-\lambda) V_{2}(x)=x$ for all $x \in S^{m-1}$, and it follows that $V_{1}(x), V_{2}(x)\in \Pi_{x}$  by the bistochasticity of the operators $V_1$ and $V_2$. Hence, we have that $V_{1}(x)=V_{2}(x)=x$  by Proposition 2.1 and definition of extreme point. By taking $x$ arbitrarily in $S^{m-1}$, we get $V_{1}=V_{2}=id$, so that the identity operator is an extreme QBO.

Let $P$ be a coordinate permutation operator, therefore, it is invertible and its inverse is also coordinate permutation operator, and hence both of them are QBO. Let  $\lambda V_{1}+(1-\lambda) V_{2}= P$ for some $\lambda \in (0;1)$, and $V_{1}, V_{2}\in \mathcal{B}_m$. From this, we have $\lambda (P^{-1}\circ V_{1})+(1-\lambda)(P^{-1}\circ V_{2})= id$  by linearity of  $P$ (both $P^{-1}\circ V_{1}$  and $P^{-1}\circ V_{2}$  are QBO according to the second statement of Theorem~2.1). We have shown above that the identity operator is an extreme QBO, and thus we get $P^{-1}\circ V_{1}=P^{-1}\circ V_{2}=id$. Hence  $V_{1}=V_{2}=P$.
\end{proof}

We denote the group of   $m-$dimensional coordinate permutation operators by $\mathcal{P}_m$. Clearly,  $|\mathcal{P}_m|=m!$. We let $\{P_j\}_{j=\overline{1,m!}}$  denote the elements in $\mathcal{P}_m$. Lemma~3.4 asserts that $\mathcal{P}_m\subset Extr(\mathcal{B}_m)$  and the third statement of Theorem 2.1 states that the extreme points of $\mathcal{B}_m$  are finite. Let  $s=|Extr(\mathcal{B}_m)\setminus\mathcal{P}_m|$, and $\{\mathcal{V}_1, \mathcal{V}_2,...,\mathcal{V}_s\}=Extr(\mathcal{B}_m)\setminus\mathcal{P}_m$. Consequently, $Extr(\mathcal{B}_m)=\{\mathcal{V}_1, \mathcal{V}_2,...,\mathcal{V}_s\}\cup\mathcal{P}_m$ and thus, by the Krein-Milman theorem, we get $$\mathcal{B}_m=conv(P_1,..., P_{m!},\mathcal{V}_1, \mathcal{V}_2,...,\mathcal{V}_s).$$

\begin{thm}
Any operator in $ri (\mathcal{B}_m)$ is strictly regular.
\end{thm}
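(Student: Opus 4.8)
The plan is to exhibit an operator $V\in ri(\mathcal{B}_m)$ as a convex combination of \emph{all} the extreme points of $\mathcal{B}_m$ with strictly positive weights, and then feed this into Theorem~3.2, exploiting the single most important feature of the list of extreme points: it contains the identity operator. Concretely, by Theorem~3.1 applied to the representation $\mathcal{B}_m=conv(P_1,\dots,P_{m!},\mathcal{V}_1,\dots,\mathcal{V}_s)$, any $V\in ri(\mathcal{B}_m)$ can be written as
$$V=\sum_{j=1}^{m!}\alpha_j P_j+\sum_{k=1}^{s}\beta_k \mathcal{V}_k,$$
where every $\alpha_j$ and every $\beta_k$ is strictly positive and the weights sum to $1$. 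I would fix such a representation at the outset.

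The heart of the argument is the periodic-point inclusion. Since the identity operator belongs to $\mathcal{P}_m$, it is one of the $P_j$, and in the representation above it carries a strictly positive weight. Applying Theorem~3.2(ii) to this convex combination gives $Per_p(V)\subset\bigcap_j Per_p(P_j)\cap\bigcap_k Per_p(\mathcal{V}_k)$; in particular $Per_p(V)\subset Per_p(id)$. For every $p\geq 2$ the set $Per_p(id)$ is empty, because each point of $S^{m-1}$ is a fixed point of the identity and hence can never have prime period $\geq 2$. Therefore $V$ has no periodic points of prime period $p\geq 2$.

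It remains to pin down the fixed point. Here I would invoke Theorem~3.2(i), which gives $Fix(V)=\bigcap_j Fix(P_j)\cap\bigcap_k Fix(\mathcal{V}_k)$, so in particular $Fix(V)\subset\bigcap_j Fix(P_j)$. A point fixed by every coordinate permutation must have all of its coordinates equal, whence $\bigcap_j Fix(P_j)=\{(\frac{1}{m},\dots,\frac{1}{m})\}$; and since the barycenter is a fixed point of every QBO, this forces $Fix(V)=\{(\frac{1}{m},\dots,\frac{1}{m})\}$. Combining the two steps, $V$ has no periodic points other than its unique fixed point, the barycenter, so Proposition~2.2 yields strict regularity of $V$.

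The point I expect to require the most care is not the analysis but the bookkeeping that makes Theorem~3.2 applicable: one must be sure that the relative-interior description of Theorem~3.1 really does place a \emph{positive} weight on the identity (equivalently, that $id$ is genuinely an extreme point of $\mathcal{B}_m$, which is exactly the content of Lemma~3.4 specialized to the trivial permutation), and that the convex combination of extreme QBOs acts pointwise as $V(x)=\sum_j\alpha_j P_j(x)+\sum_k\beta_k\mathcal{V}_k(x)$, so that the hypotheses of Theorem~3.2 are literally met. Once these two points are secured, the identity operator does all the work and the conclusion is essentially immediate; no estimate or limiting argument is needed.
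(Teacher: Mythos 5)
Your proposal is correct and follows essentially the same route as the paper: express $V$ via Theorem~3.1 as a convex combination of all extreme points with strictly positive weights, then apply Theorem~3.2 to get $Fix(V)\subset\bigcap_j Fix(P_j)=\{(\frac{1}{m},\dots,\frac{1}{m})\}$ and $Per_p(V)\subset\bigcap_j Per_p(P_j)=\emptyset$ for $p\geq 2$, and conclude with Proposition~2.2. Your explicit observation that the identity operator is what makes $\bigcap_j Per_p(P_j)$ empty is just a spelled-out version of the step the paper leaves implicit.
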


\begin{proof}
Let  $V\in ri(\mathcal{B}_m)$ be an operator in the relatively interior of  $\mathcal{B}_m$, then it can be expressed as
$$V= \sum_{j=1}^{s} \lambda_{j}\mathcal{V}_{j}+\sum_{j=1}^{m!}\lambda_{j+s} P_j$$
by Theorem~3.1, where $\sum_{j=1}^{s+m!}\lambda_j=1$  and $\lambda_j>0$  for $j=\overline{1,(s+m!)}$. Then the first statement of Theorem~3.2 implies
\begin{equation}\label{Eq1}
Fix(V)=\bigcap_{j=1}^s Fix(\mathcal{V}_j)\bigcap\bigcap_{j=1}^{m!} Fix(P_j)\subset \bigcap_{j=1}^{m!} Fix(P_j)=(\frac{1}{m},\frac{1}{m},...,\frac{1}{m})
\end{equation}
and according to the second statement of Theorem~3.2, we get
\begin{equation}\label{In1}
Per_{p}(V)\subset\bigcap_{j=1}^s Per_p(\mathcal{V}_j)\bigcap\bigcap_{j=1}^{m!} Per_p(P_j)\subset\bigcap_{j=1}^{m!} Per_p(P_j)=\emptyset.
\end{equation}
for all natural number $p\geq 2$.
Hence we obtain strictly regularity of $V$  by Proposition~2.2
\end{proof}

\begin{rem}
We note that Theorem~3.3 is proved using geometrical principles, and the proof is based on Theorem~3.2. This theorem can also be followed by the main theorem of \cite{Second_Ex}  (Theorem 3.1 in that work).

\end{rem}

\begin{cor}
The set of strictly regular QBO is dense in $\mathcal{B}_m$.
\end{cor}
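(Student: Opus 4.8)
The plan is to deduce the density directly from two facts already in hand: that the relative interior of $\mathcal{B}_m$ consists entirely of strictly regular operators (Theorem~3.3), and that the relative interior of a nonempty closed convex set is dense in it (Corollary~3.1). First I would record that $\mathcal{B}_m$ is a nonempty closed convex subset of $M_m^c(R)$, as noted in the preliminaries right after the definition of QBO. This is precisely the hypothesis of Corollary~3.1, so that result applies and gives $\mathcal{B}_m=\overline{ri(\mathcal{B}_m)}$; equivalently, $ri(\mathcal{B}_m)$ is a dense subset of $\mathcal{B}_m$. By Lemma~3.1 this relative interior is moreover nonempty, so the statement is not vacuous.

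Next I would invoke Theorem~3.3, which asserts that every operator lying in $ri(\mathcal{B}_m)$ is strictly regular. Writing $\mathcal{R}_m$ for the set of all strictly regular QBO, this yields the chain of inclusions $ri(\mathcal{B}_m)\subset \mathcal{R}_m\subset \mathcal{B}_m$. Passing to closures and using the density just established,
$$\mathcal{B}_m=\overline{ri(\mathcal{B}_m)}\subset \overline{\mathcal{R}_m}\subset \mathcal{B}_m,$$
so that $\overline{\mathcal{R}_m}=\mathcal{B}_m$. This is exactly the assertion that the strictly regular QBO form a dense subset of $\mathcal{B}_m$, which completes the argument.

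There is essentially no obstacle in this final step: the substantive work has been front-loaded into Theorem~3.3 and Corollary~3.1, and the corollary is a two-line combination of them. The only points I would pause to verify are the hypotheses of the two cited results, namely that $\mathcal{B}_m$ is closed and convex (so Corollary~3.1 is legitimately applicable) and that the algebraic description of the relative interior from Theorem~3.1 is the one used to express a generic $V\in ri(\mathcal{B}_m)$ as a strictly positive convex combination in Theorem~3.3. Both are immediate from the preliminaries together with the Krein–Milman representation $\mathcal{B}_m=conv(P_1,\dots,P_{m!},\mathcal{V}_1,\dots,\mathcal{V}_s)$ obtained just before Theorem~3.3.
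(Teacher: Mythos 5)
Your argument is correct and is exactly the paper's proof: the paper also combines Corollary~3.1 (density of $ri(\mathcal{B}_m)$ in the closed convex set $\mathcal{B}_m$) with Theorem~3.3 (strict regularity on $ri(\mathcal{B}_m)$), merely stating it more tersely. Your version just spells out the closure inclusions explicitly.
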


\begin{proof}
According to Corollary~\ref{ImCorr1}, we have $\overline{ri(\mathcal{B}_m)}=\mathcal{B}_m$.
\end{proof}

\section{Examples}

In this section we give examples of the strictly regular QBO in any dimensions with the following theorem.
\begin{thm}
Let $A=\{a_{ij}\}_{i,j=1}^m$  be a bistochastic matrix, then the following statements hold:

i) $V_{A}(x)_k=\sum_{i=1}^m {a_{ki}x_i^2+x_k(1-x_k)}$  ($k=\overline{1,m}$) is a bistochastic operator;

ii)	$V(x)_{k}={1\over m}\sum_{i=1}^m x_i^2+x_k(1-x_k)$   ($k=\overline{1,m}$) is a strictly regular QBO.

\end{thm}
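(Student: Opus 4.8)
The plan is to prove the two parts in order, obtaining (ii) as the special case $A=\frac1m J$ (all entries $\frac1m$) of (i) together with a short dynamical argument. For (i) I would first exhibit $V_A$ as a QSO and as a stochastic operator. Writing $1-x_k=\sum_{j\neq k}x_j$ gives $V_A(x)_k=\sum_i a_{ki}x_i^2+\sum_{j\neq k}x_kx_j$, from which I read off the heredity coefficients $p_{ii,k}=a_{ki}$, $p_{kj,k}=p_{jk,k}=\frac12$ for $j\neq k$, and $p_{ij,k}=0$ otherwise; symmetry is clear, nonnegativity uses $a_{ki}\ge0$, and the normalization $\sum_r p_{ij,r}=1$ follows from the unit column sums of $A$ when $i=j$ and from $\frac12+\frac12$ when $i\neq j$. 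The compact form $V_A(x)=x+(A-I)w$ with $w=(x_1^2,\ldots,x_m^2)$ then immediately gives $\sum_k V_A(x)_k=1$ and $V_A(x)_k\ge0$, so $V_A$ is stochastic.

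The substantive part of (i) is bistochasticity, $V_A(x)\prec x$, which I would verify through the partial-sum characterization of majorization. Fix $k$ and a subset $T$ with $|T|=k$, and set $\sigma_j:=\sum_{i\in T}a_{ij}$; since $A$ is doubly stochastic, $\sigma_j\in[0,1]$ and $\sum_j\sigma_j=k$. Summing the operator over $T$ gives $\sum_{i\in T}V_A(x)_i=\sum_j\sigma_jx_j^2+\sum_{j\in T}(x_j-x_j^2)$. I would bound the two pieces separately: a rearrangement bound gives $\sum_j\sigma_jx_j^2\le\sum_{i=1}^k x_{[i]}^2$ (the constrained maximum of this linear functional loads the $k$ largest squares), and an exchange argument gives $\sum_{j\in T}(x_j-x_j^2)\le\sum_{i=1}^k(x_{[i]}-x_{[i]}^2)$; the swap step rests on the identity $g(x_b)-g(x_a)=(x_b-x_a)(1-x_a-x_b)$ for $g(t)=t-t^2$, which is nonnegative whenever $x_b>x_a$ because any two coordinates satisfy $x_a+x_b\le1$. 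Adding the two bounds makes the squares cancel termwise, since $x_{[i]}^2+(x_{[i]}-x_{[i]}^2)=x_{[i]}$, so $\sum_{i\in T}V_A(x)_i\le\sum_{i=1}^k x_{[i]}$; maximizing over $T$ yields $\sum_{i=1}^k V_A(x)_{[i]}\le\sum_{i=1}^k x_{[i]}$ for each $k$, with equality at $k=m$. I expect this majorization estimate—in particular arranging the two bounds so that the quadratic terms cancel exactly—to be the main obstacle.

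For (ii), $V$ is $V_A$ with $A=\frac1m J$, so it is a QBO by (i). Its only fixed point is the barycenter: $V(x)=x$ forces $\frac{s}{m}-x_k^2=0$ for all $k$ (with $s=\sum_i x_i^2$), hence all $x_k$ equal $\frac1m$. By Proposition~2.2 it then suffices to exclude periodic points of period $\ge2$. Here I would use that $\phi(x)=\sum_i x_i^2$ is strictly Schur-convex, so $\phi$ is nonincreasing along every trajectory (as $V(x)\prec x$); on a periodic orbit it is also periodic, hence constant, and strictness forces $V(x_0)$ to be a permutation of $x_0$. Consequently $s$ is constant on the orbit and $V$ acts coordinatewise by the single scalar map $h(t)=t-t^2+\frac{s}{m}$, which must permute the multiset of coordinates of $x_0$.

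To finish I would avoid any appeal to one-dimensional (Sharkovskii-type) dynamics and instead use a single exact cancellation. Let $t^*=\sqrt{s/m}$ be the scalar fixed point; a direct computation gives $h(v)-t^*=(v-t^*)(1-v-t^*)$. Since $h$ permutes the coordinate multiset, $\sum_v(h(v)-t^*)^2=\sum_v(v-t^*)^2$, and subtracting yields $\sum_v(v-t^*)^2\,(v+t^*)(2-v-t^*)=0$. Because $t^*\in(0,1)$, each factor $(v+t^*)(2-v-t^*)$ is strictly positive, so every term forces $v=t^*$; thus all coordinates equal $t^*$, i.e. $x_0$ is the barycenter, contradicting period $\ge2$. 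Hence $V$ has no nontrivial periodic points and, by Proposition~2.2, is strictly regular. The delicate point in (ii) is precisely this identity, which converts multiset-invariance into a sum of manifestly nonnegative terms.
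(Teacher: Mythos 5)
Your proposal is correct. For part (i) you take essentially the paper's route: the same estimate $\sum_j\sigma_j x_j^2\le\sum_{i=1}^k x_{[i]}^2$ coming from double stochasticity of $A$, and the same factorization $(x_b-x_a)(1-x_a-x_b)$ controlling the contribution of $t\mapsto t-t^2$; the paper merely fuses your two separate bounds into the single termwise identity $x_{[s]}^2+x_{[i_s]}(1-x_{[i_s]})=x_{[s]}+(x_{[s]}-x_{[i_s]})(x_{[s]}+x_{[i_s]}-1)\le x_{[s]}$, and it omits the (correct) verification that $V_A$ is a QSO, which you supply. For part (ii), however, your argument is genuinely different. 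The paper shows each order cone $C_\sigma=\{x: x_{\sigma(1)}\ge\cdots\ge x_{\sigma(m)}\}$ is invariant under $V$, via $V(x)_{\sigma(i)}-V(x)_{\sigma(j)}=(x_{\sigma(i)}-x_{\sigma(j)})(1-x_{\sigma(i)}-x_{\sigma(j)})\ge 0$, and then deduces convergence to the barycenter from this monotonicity together with the finiteness of $\omega$-limit sets of bistochastic operators, computing $Fix(V)$ separately. You instead use that a periodic orbit of a bistochastic map consists of mutual permutations (strict Schur-convexity of $\sum_i x_i^2$, or equivalently the cyclic chain of majorizations), so that on such an orbit $V$ acts coordinatewise by $h(t)=t-t^2+s/m$ permuting the coordinate multiset, and you finish with the exact identity $\sum_v(v-t^*)^2(v+t^*)(2-v-t^*)=0$, whose nonsquared factors are strictly positive because $t^*=\sqrt{s/m}\in(0,1/\sqrt{m}\,]$ for $m\ge 2$ (the case $m=1$ being trivial). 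Both arguments are sound; the paper's route exhibits $V$ as an order-preserving map, which is the structural fact highlighted in its closing remark, whereas yours is shorter on the dynamical side, uses only one step of the orbit, and replaces the somewhat terse ``monotone plus finite $\omega$-limit set implies convergence to a fixed point'' step by a purely algebraic cancellation.
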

\begin{proof}
i) Let  $x'=V(x)$. Then $x'\prec x$, and by the definition of majorization, $\sum_{s=1}^k x_{i_s}'\leq \sum_{j=1}^k x_{[j]}$ for all $\forall k\in N_{m} =\{1,2,...,m\}$  and  $\{i_1,i_2,...,i_k\}\subset N_m$. Let  $x_{\downarrow}=(x_{\pi(1)},x_{\pi(2)}),...,x_{\pi(m)})$, namely  $\pi\in S_m$ is suitable to the permutation of the coordinates of $x$  in non-increasing order, where $S_m$  is the permutation group of  $N_m$. Firstly, we will show that $\sum_{j=1}^m(\sum_{s=1}^k a_{i_s j})x_j^2\leq\sum_{j=1}^k x_{[j]}^2$  (*) for each multi-index $\{i_1,i_2,...,i_k\}\subset N_m$. Indeed, by the bistochasticity of the matrix  $A$, it is followed that  $\sum_{s=1}^k a_{i_s t}\leq 1$ for all $t\in N_m$, and  $\sum_{j=1}^m(\sum_{s=1}^k a_{i_s j})=\sum_{j=1}^m(\sum_{s=1}^k a_{i_s \pi(j)})=k$. Therefore,
\begin{equation}
\sum_{j=k+1}^m \sum_{s=1}^k a_{i_s\pi(j)}=k-\sum_{j=1}^k\sum_{s=1}^k a_{i_s\pi(j)}=\sum_{j=1}^k(1-\sum_{s=1}^k a_{i_s\pi(j)}).
\end{equation}
Thus, by $x_{\pi(m)}\leq x_{\pi(m-1)}\leq...\leq x_{\pi(1)}$, we have
\begin{equation}
\sum_{j=k+1}^m (\sum_{s=1}^k a_{i_s\pi(j)})x_{\pi(j)}^2\leq \sum_{j=1}^k(1-\sum_{s=1}^k a_{i_s\pi(j)})x_{\pi(j)}^2.
\end{equation}
From this, it follows that $\sum_{j=1}^m(\sum_{s=1}^k a_{i_sj})x_j^2=\sum_{j=1}^m(\sum_{s=1}^k a_{i_s\pi(j)})x_{\pi(j)}^2\leq\sum_{j=1}^k x_{\pi(j)}^2=\sum_{j=1}^k x_{[j]}^2$.

We denote by $x_{[i_1]},x_{[i_2]},...,x_{[i_k]}$  the non-increasing rearrangement of  $x_{i_1},x_{i_2},...,x_{i_k}$, then by the definition of $V_A$  and $x\in S^{m-1}$, we have
$\sum_{s=1}^k x_{i_s}'=\sum_{j=1}^m(\sum_{s=1}^k a_{i_sj})x_j^2+\sum_{s=1}^k x_{i_s}(1-x_{i_s})\leq\sum_{s=1}^k x_{[s]}^2+\sum_{s=1}^k x_{[i_s]}(1-x_{[i_s]})=\sum_{s=1}^k(x_{[s]}+(x_{[s]}-x_{[i_s]})
(x_{[s]}+x_{[i_s]}-1))\leq\sum_{s=1}^k x_{[s]}$ (the last inequality follows from $(x_{[s]}- x_{[i_s]})(x_{[s]}+x_{[i_s]}-1)\leq 0$).

ii) Applying the previous part of the theorem to the matrix $A=\{\frac{1}{m}\}_{i,j=\overline{1;m}}$ (i.e. all elements of the matrix equal to each other), we obtain that $V$  is a bistochastic operator.
Let  $C_{\sigma}:=\{x\in S^{m-1}: x_{\sigma(1)}\geq x_{\sigma(2)}\geq...\geq x_{\sigma(m)})\}$, where $\sigma\in S_m$. We take an element $x$ in $C_{\sigma}$, then
$V(x)_{\sigma(i)}-V(x)_{\sigma(j)}=\frac{1}{m}\sum_{i=1}^m x_i^2+x_{\sigma(i)}(1-x_{\sigma(i)})-\frac{1}{m}\sum_{i=1}^m x_i^2-x_{\sigma(j)}(1-x_{\sigma(j)})=x_{\sigma(i)}(1-x_{\sigma(i)})
-x_{\sigma(j)}(1-x_{\sigma(j)})=(x_{\sigma(i)}-x_{\sigma(j)})(1-x_{\sigma(i)})-x_{\sigma(j)})\geq 0$
for each $i,j\in N_m$ with $i<j$. Hence, we get $V(x)\in C_{\sigma}$, i.e. each $C_{\sigma}$  is invariant under $C_{\sigma}$. Therefore, any trajectory $V$  converges to some point in $Fix(V)$  by the bistochasticity of the operator $V$. Let $p\in Fix(V)$, then $V(p)=p$  implies that $p_i^2={1\over m}\sum_{i=1}^m p_i^2$  for all $i$. Hence, we get $p=(\frac{1}{m},\frac{1}{m},...,\frac{1}{m})$, and thus $Fix(V)=\{(\frac{1}{m},\frac{1}{m},...,\frac{1}{m})\}$. Since $Fix(V)=\{(\frac{1}{m},\frac{1}{m},...,\frac{1}{m})\}$, it follows that any trajectory $V$  converges to the unique fixed point $(\frac{1}{m},\frac{1}{m},...,\frac{1}{m})$. Consequently, by the definition, $V$   is a strictly regular bistochastic operator.
\end{proof}

\begin{rem}
We note that in the above theorem, the strictly regularity of the operator is proved by using the fact that it is order-preserving map (monotone).  It is worth mentioned that the second statement of the theorem can also be proved applying the main theorem of \cite{Second_Ex} (Theorem~3.1 in that paper) and this method of proof is completely different from ours.
\end{rem}

\textbf{Acknowledgments.} I would like to express deep gratitude to professors U.Rozikov, R.Ghanikhodjaev and U.Jamilov for many useful discussions, M.Saburov for attentive reading of the text and for making many useful comments.

\bibliographystyle{amsplain}

\end{document}